\title{A surgical perspective on quasi-alternating links}
\author{Liam Watson}
\address{Department of Mathematics, UCLA, 520 Portola Plaza, Los Angeles, CA 90095.}
\email{lwatson@math.ucla.edu}
\urladdr{http://www.math.ucla.edu/~lwatson}
\newtheorem{theorem}{Theorem}[section]    
\newtheorem{proposition}{Proposition}[section]
\let\c@proposition=\c@theorem 
\theoremstyle{definition}
\newtheorem{definition}[theorem]{Definition}    
\theoremstyle{definition}             
\theoremstyle{definition}
\newtheorem*{example}{Example} 
\newcommand{\OSz}{Ozsv\'ath and Szab\'o} 
\newcommand{\QA}{{\bf QA}}
\newcommand{\sC}{\mathcal{C}}
\newcommand{\sQ}{\mathcal{Q}}
\newcommand{\bZ}{\mathbb{Z}}
\newcommand{\bQ}{\mathbb{Q}}
\newcommand{\pq}{\frac{p}{q}}
\newcommand{\overzero}{\frac{1}{0}}
\newcommand{\lm}{\lambda_M}
\newcommand{\into}{\hookrightarrow}
\newcommand{\Br}{\boldsymbol\Sigma}
\newcommand{\fibre}{\varphi}
\newcommand{\HFhat}{\widehat{\operatorname{HF}}}
\newcommand{\Rk}{\operatorname{rk}}
\newcommand{\positive}
	{\raisebox{-2pt}{\includegraphics[scale=0.085]{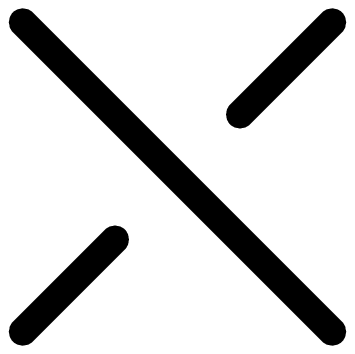}}}
\newcommand{\zero}
	{\raisebox{-2pt}
	{\includegraphics[scale=0.085]{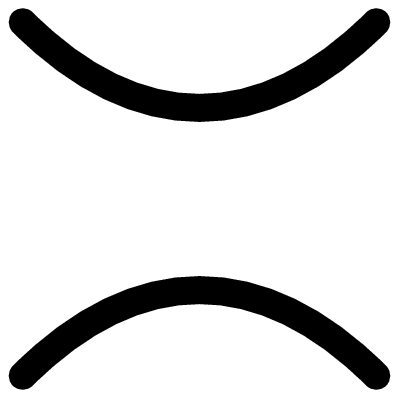}}}
\newcommand{\one}
	{\raisebox{-2pt}
	{\includegraphics[scale=0.085]{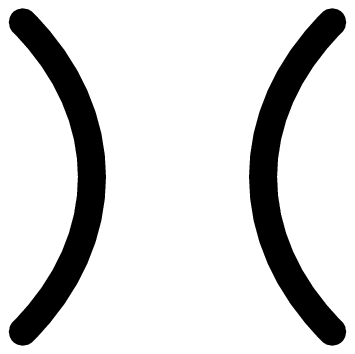}}}
\begin{document}

\begin{abstract}   
We show that quasi-alternating links arise naturally when considering surgery on a strongly invertible L-space knot (that is, a knot that yields an L-space for some Dehn surgery). In particular, we show that for many known classes of L-space knots, every sufficiently large surgery may be realized as the two-fold branched cover of a quasi-alternating link. Consequently, there is considerable overlap between L-spaces obtained by surgery on $S^3$, and L-spaces resulting as two-fold branched covers of quasi-alternating links. By adapting this approach to certain Seifert fibered spaces, it is possible to give an iterative construction for quasi-alternating Montesinos links. \end{abstract}

\maketitle



%

This article studies quasi-alternating links -- in particular how such links arise naturally from the perspective of Dehn surgery -- and constitutes an extension of certain aspects of the author's talk at the International Georgia Topology Conference. The talk discussed the application of Khovanov homology as an obstruction to exceptional surgeries on strongly invertible knots \cite{Watson2008}, and central to this work is the following: if a two-fold branched cover of $S^3$, branched over a link $L$, has finite fundamental group then the reduced Khovanov homology of the branch set $L$ is supported in at most 2 diagonals \cite[Theorem 5.2]{Watson2008}. In establishing this result, one is led to consider Dehn surgery on certain Seifert fibered manifolds, the result of which is in turn often forced to branch cover a quasi-alternating link for topological reasons. More generally, many families of quasi-alternating links arise in this way, and it is this point that we hope to elucidate here.


\section{Background and context}

Quasi-alternating links were introduced by \OSz\ as a natural extension of the class of alternating links in the context of Heegaard-Floer homology for two-fold branched covers \cite{OSz2005-branch}. 

\begin{definition}\label{def:qa}The set of quasi-alternating links $\sQ$ is the smallest set of links containing the trivial knot, and closed under the following relation: if $L$ admits a projection with distinguished crossing $L(\positive)$ so that \[\det(L(\positive))=\det(L(\zero))+\det(L(\one))\] for which $L(\zero),L(\one)\in\sQ$, then $L=L(\positive)\in\sQ$ as well. \end{definition}

In particular, \OSz\ demonstrate that a non-split alternating link is quasi-alternating \cite[Lemma 3.2]{OSz2005-branch}, and that the two-fold branched cover of a quasi-alternating link is an L-space \cite[Proposition 3.3]{OSz2005-branch}. Recall that a rational homology sphere $Y$ is an L-space if $\Rk\HFhat(Y)=|H_1(Y;\bZ)|$, where $\HFhat$ denotes the (hat version of) Heegaard-Floer homology \cite{OSz2005-lens}.  

In a related vein, Manolescu demonstrated that the rank of the knot Floer homology of a quasi-alternating knot is given by the determinant of the knot \cite{Manolescu2007}, and subsequently Manolescu and Ozsv\'ath showed that quasi-alternating links have both Khovanov \cite{Khovanov2000} and knot Floer \cite{OSz2004-knot,Rasmussen2003} homology groups supported in a single diagonal (or, {\em thin} homology) \cite{MO2007}. The same is true of odd-Khovanov homology \cite{ORS2007}, and as a result these homology theories contain essentially the same information as their respective underlying polynomials when applied to  quasi-alternating links. While it seemed possible that the collection of thin links was the same as the collection of quasi-alternating links, the situation is in fact more complicated. Greene has demonstrated that the knot $11^n_{50}$ is not quasi-alternating despite having all of the aforementioned homologies supported in a single diagonal \cite{Greene2009}. 

L-spaces turn out to have interesting topological properties, and as such it would be interesting to be able to characterize such manifolds without reference to Heegaard-Floer homology. Since quasi-alternating links give rise to L-spaces (by way of two-fold branched covers), attempts to better understand such branch sets may be viewed as an approach to such a characterization. In constructing the spectral sequence from the reduced Khovanov homology of a branch set, converging to the Heegaard-Floer homology of the two-fold branched cover \cite{OSz2005-branch}, Ozsv\'ath and Szab\'o show that the skein exact triangle in Khovanov homology lifts to the surgery exact sequence for Heegaard-Floer homology in the cover. With this in mind, our aim is to better understand quasi-alternating knots from the perspective of Dehn surgery.  

Let $S^3_r(K)$ denote $r$-surgery on a knot $K\into S^3$. If $S^3_r(K)$ is an L-space then $K$ is referred to as an L-space knot, and it is well known that every such knot gives rise to an infinite family of L-spaces: if $S^3_{r_0}(K)$ is an L-space for $r_0>0$, then $S^3_r(K)$ is an L-space as well for every rational number $r\ge r_0$ \cite{OSz2005-lens}. It seems natural to ask then if quasi-alternating links enjoy an analogous property. 

To this end, we will be primarily interested in the knots $K\into S^3$ having the following property:
\begin{itemize}
\item[{\bf QA}] There is a positive integer $N$ such that $S^3_{r}(K)$ is the two-fold branched cover of a quasi-alternating link for all rational numbers $r\ge N$.
\end{itemize}
Notice that a knot satisfying property \QA\ is necessarily strongly invertible. It turns out that many L-space knots are strongly invertible, and our aim is to establish that these examples satisfy property \QA.  
\begin{theorem} The following knots have property \QA: 
\begin{itemize}
\item[1.] Torus knots and, more generally, Berge knots (see \fullref{prp:berge}, as well as \cite[Section 4]{Watson2008}). 
\item[2.] The $(-2,3,q)$-pretzel knot, for all positive, odd integers $q$ (see \fullref{thm:pretzels}). 
\item[3.] All sufficiently positive cables of knots with property \QA\ (see \fullref{thm:cables}).  
\end{itemize}
\end{theorem}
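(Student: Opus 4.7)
The unifying strategy is the Montesinos trick. Each of the knots listed is strongly invertible, so the quotient of $S^3$ by the strong involution is again $S^3$, and Dehn surgery of slope $r$ on $K$ corresponds under the quotient to replacing a trivial tangle (in the quotient of a neighborhood of $K$) by a rational tangle whose slope is determined by $r$. This produces a family of links $\{L_r\}$ in $S^3$ whose two-fold branched covers satisfy $\Br(L_r)=S^3_r(K)$. Property \QA\ is therefore equivalent to showing $L_r\in\sQ$ for all $r\geq N$.

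To establish this, the plan is to set up an inductive scheme keyed directly to \fullref{def:qa}. The rational tangle of slope $r$ is built from integer tangles by successive crossing additions. One exhibits, for each $r$ sufficiently large, a distinguished crossing in $L_r$ whose two resolutions $L_r(\zero)$ and $L_r(\one)$ equal $L_{r'}$ and $L_{r''}$ for slopes $r',r''$ that are simpler in the continued fraction sense (typically $r-1$ and $\infty$), and then verify the determinant identity $\det(L_r)=\det(L_{r'})+\det(L_{r''})$. Because $\det(L_r)=|H_1(\Br(L_r);\bZ)|=|H_1(S^3_r(K);\bZ)|$, this identity reduces to the arithmetic of surgery coefficients (if $r=p/q$, then $|H_1|=|p|$). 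Together with a single quasi-alternating base case, \fullref{def:qa} then propagates quasi-alternatingness to all slopes $\geq N$.

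For part (1), the base case is provided by the fact that sufficiently large surgery on a Berge knot yields a lens space; the two-fold branched cover description identifies the base-case link $L_N$ with a two-bridge link, which is alternating and non-split, hence quasi-alternating by \cite[Lemma 3.2]{OSz2005-branch}. The inductive device then propagates upward to all slopes. For part (2) one fixes the explicit strong involution on the $(-2,3,q)$-pretzel, writes down the associated Montesinos tangle decomposition, produces a quasi-alternating base case at a convenient integer slope (for example by recognizing $L_N$ as alternating), and verifies the determinant identity directly from the surgery formula. For part (3), assuming $K$ has property \QA, I would show that surgery on the cable $C_{p,q}(K)$ corresponds, via a tangle equivalence, to a family of rational tangle replacements within the Montesinos diagram for $K$; a nested induction, first on the cabling slope and then reducing to the inductive hypothesis on $K$, yields the result.

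The principal obstacle is the determinant identity together with the existence of a quasi-alternating base case in each of the three families. For Berge knots the base case is classical; for the pretzel family the determinant computation is direct but must be executed case by case (with the parity of $q$ influencing the precise identification); for cables the subtlety is that the hypothesis on $K$ only guarantees quasi-alternating links for slopes $\geq N_K$, so the induction for $C_{p,q}(K)$ must be arranged so that the rational tangle replacements appearing at each inductive step all \emph{land} in the valid range for $K$, and the base case of the cable induction must be set up far enough out that this holds.
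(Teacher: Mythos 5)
Your general machinery is the same as the paper's: the Montesinos trick plus an induction on the continued fraction expansion of the surgery slope, driven by the identity $\det(\tau(r))=|H_1|$ of the branched double cover --- this is exactly what \fullref{thm:main-qa} packages as ``quasi-alternating tangles'' --- and your part (1) matches \fullref{prp:berge}: the lens space filling gives a two-bridge, hence non-split alternating, hence quasi-alternating closure by Hodgson--Rubinstein, while the meridional closure is the unknot. The gaps are in parts (2) and (3), at precisely the points where the theorem needs input beyond this general induction.

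For the pretzels, the whole difficulty is the base case, and your plan (``for example by recognizing $L_N$ as alternating'') has no support: for $q>9$ the knot $P_q$ admits no lens space or finite surgeries, and there is no integer slope at which its branch set is known to be alternating. The proof of \fullref{thm:pretzels} requires a further idea: after computing the framed quotient tangle $T_q$, one observes that the family of candidate base cases $\tau_q(2q+5)$, as $q$ varies, is exactly the family of integer closures $\tau'(n)$ of a single auxiliary tangle $T'$, whose two distinguished closures are identified explicitly as the $(2,4)$-torus link ($\det=4$) and the two-bridge knot $7_2$ ($\det=11$); a second application of the determinant induction then certifies all the base cases simultaneously in $q$. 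Your outline contains no mechanism for producing a base case uniformly in $q$. For the cables, the missing ingredient is Gordon's surgery formula $S^3_{r}(\sC_{q,p}(K))\cong S^3_{r/q^2}(K)$ for $r=(kpq\pm1)/k$ \cite{Gordon1983}: taking $r=pq-1$ exhibits one surgery on the cable as a large surgery on $K$ itself, which for $p\gg0$ lies in the range where property \QA\ for $K$ applies; that is what furnishes the quasi-alternating framing of the cable's quotient tangle, after which \fullref{thm:main-qa} finishes. Your proposed substitute --- a ``tangle equivalence'' realizing surgeries on $\sC_{q,p}(K)$ as rational tangle replacements in diagrams for the branch sets of $K$, with a nested induction on the cabling slope --- does not work as stated: the quotient of the cable space is not a rational tangle, so a general surgery on the cable is not a rational closure of $K$'s quotient tangle; only at the exceptional slopes $(kpq\pm1)/k$, where the filled cable space becomes a solid torus, does such an identification hold, and that is precisely Gordon's theorem, which your argument never invokes. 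Moreover no induction on $p$ is needed or available: each sufficiently large $p$ supplies its own base case, and the induction runs only over the surgery slope of the cable.
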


Consequently, there is considerable overlap between known L-spaces obtained by surgery on a knot in $S^3$, and L-spaces obtained as the two-fold branched cover of a quasi-alternating link.

In certain instances, one may ask if property \QA\ holds for knots in manifolds other than $S^3$ (that is, if suitably large surgeries on the knot may be realized as two-fold branched covers of quasi-alternating links). For example, it can be shown that Dehn surgery on a regular fibre of certain Seifert fibered spaces gives rise to manifolds that are two-fold branched covers of quasi-alternating links, and this in turn yields an iterative  geometric construction for producing all quasi-alternating Montesinos links (see \fullref{sec:QA-Montesinos}). 

\section{The Montesinos trick}

A tangle $T$ is a pair $(B^3,\tau)$ where $\tau\into B^3$ is a pair of properly embedded arcs in a three-ball (together with a possibly empty collection of closed components), meeting the boundary sphere transversally in the 4 endpoints. The two-fold branched cover of a tangle will be denoted by $\Br(B^3,\tau)$, where $\tau$ is the branch set. Notice that $\Br(B^3,\tau)$ is a manifold with torus boundary. Montesinos' observation is that a Dehn filling of such a manifold may be viewed as a two-fold branched cover of $S^3$, the branch set of which is obtained by attaching a rational tangle to $T$ in a prescribed way \cite{Montesinos1975}. Recall that a tangle is rational if and only if the two-fold branched cover is a solid torus, and that tangles in this setting are considered up to homeomorphism of the pair $(B^3,\tau)$.  

To exploit this fact, generally referred to as {\em the Montesinos trick}, we briefly recall the notation introduced in \cite{Watson2008}. 

\begin{figure}[ht!]
\begin{center}
\labellist \small
	\pinlabel $\gamma_{\overzero}$ at -73 278
	\pinlabel $\gamma_0$ at -150 217
	\pinlabel $\tau(\overzero)$ at 280 210
	\pinlabel $\tau(0)$ at 615 210
\endlabellist
\raisebox{0pt}{\includegraphics[scale=0.25]{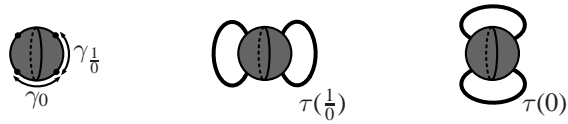}}
\end{center}
\caption{The arcs $\gamma_{\overzero}$ and $\gamma_0$ in the boundary of a tangle (left); and the `denominator' (centre) and `numerator' (right) closures denoted $\tau(\overzero)$ and $\tau(0)$ respectively when applied to a given tangle with fixed framing.}
\label{fig:closures}
\end{figure}

Set $M=\Br(B^3,\tau)$ and let $\alpha$ and $\beta$ be a pair of slopes in $\partial M$ that intersect geometrically in a single point. Then there is a choice of representative for the tangle $T$ compatible with the pair $\{\alpha, \beta\}$ in the sense that $\widetilde{\gamma_{\overzero}}=\alpha$ and $\widetilde{\gamma_0}=\beta$. Here, $\gamma_{\overzero}$ and $\gamma_{0}$ are arcs embedded in the boundary of the tangle with endpoints on $\partial \tau$ as in \fullref{fig:closures}, and $\{\widetilde{\gamma_{\overzero}},\widetilde{\gamma_0}\}$ is the pair of slopes in $\partial\left(\Br(B^3,\tau)\right)=\partial M$ covering $\gamma_{\overzero}$ and $\gamma_{0}$.

As a result, the Dehn fillings $M(\alpha)$ and $M(\beta)$ may be obtained as the two-fold branched covers $\Br(S^3,\tau(\overzero))$ and $\Br(S^3,\tau(0))$ respectively, where $\tau(\overzero)$ and $\tau(0)$ are the links resulting from the closures by the rational tangles shown in \fullref{fig:closures}.

In particular, $M(n\alpha+\beta)\cong\Br(S^3,\tau(n))$ in this notation, giving the branch sets for integer surgeries relative to the basis $\{\alpha,\beta\}$ (throughout we choose an orientation with $\alpha\cdot\beta=+1$). The closure giving rise to the branch set $\tau(n)$ is shown in \fullref{fig:fraction}. Notice that the half-twists in the base lift to full Dehn twists along $\alpha$ in the cover. More generally, we may write $M(p\alpha+q\beta)\cong\Br(S^3,\tau(r))$ where $r=\pq$ is given by the continued fraction expansion $[a_1,\ldots,a_\ell]$, and $\tau(r)$ is the link obtained by adding a rational tangle specified by the continued fraction to the tangle $T$. A specific example is shown in \fullref{fig:fraction}; for details see \cite[Section 3.3]{Watson2008}. 

\begin{figure}[ht!]
\begin{center}
\labellist \small 
	\pinlabel $\cdots$ at 382 464
	\pinlabel $\underbrace{\phantom{aaaaaaaaaaaa}}_n$ at 325 390
\endlabellist	
\raisebox{-18pt}{\includegraphics[scale=0.25]{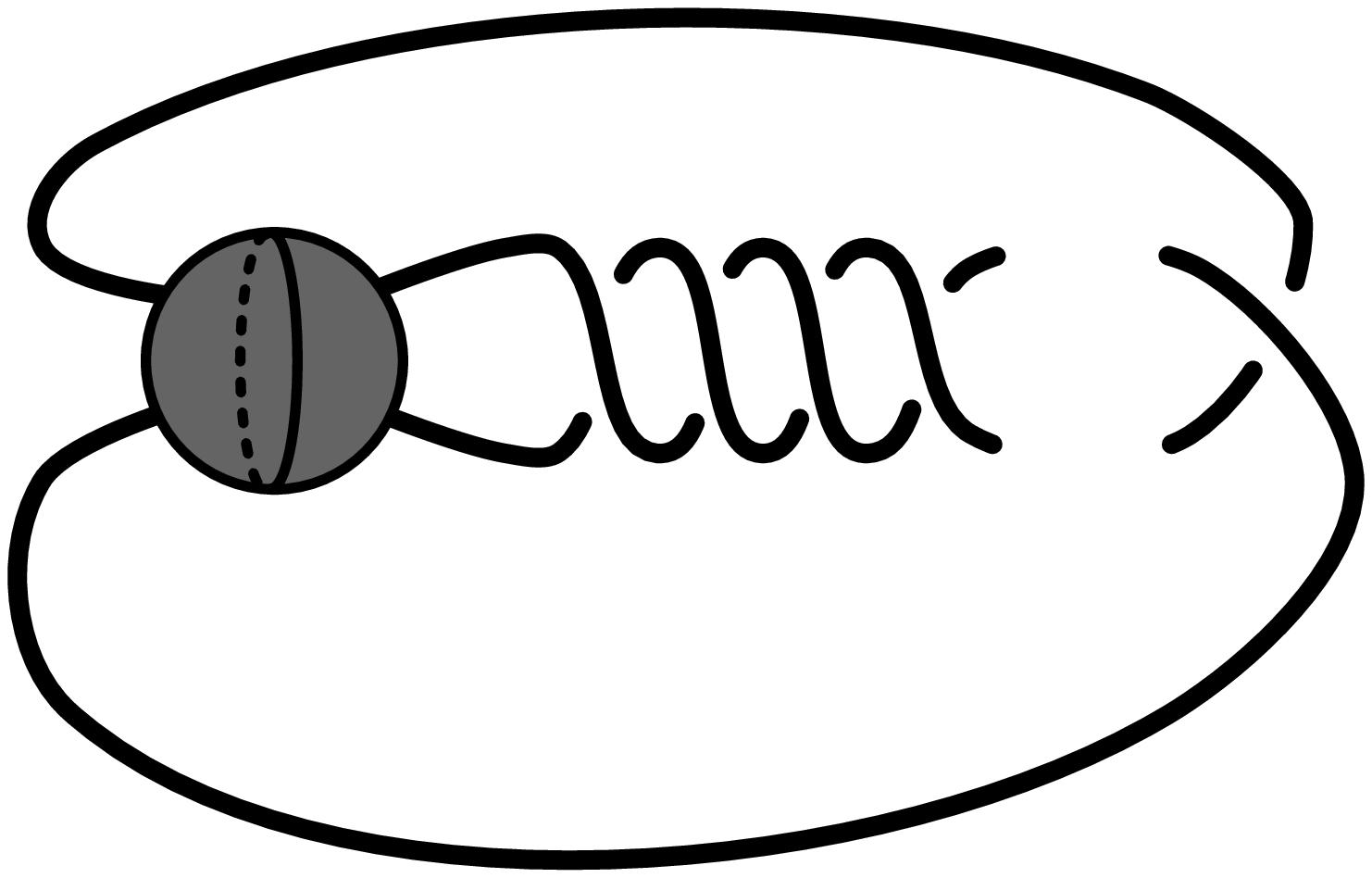}}\qquad\quad
\labellist \small 
	\pinlabel $\underbrace{\phantom{aaai}}_1\underbrace{\phantom{iiaaaaaaaaa}}_3\underbrace{\phantom{aaaaaaaa}}_3$ at 351 380
\endlabellist
\raisebox{0pt}{\includegraphics[scale=0.25]{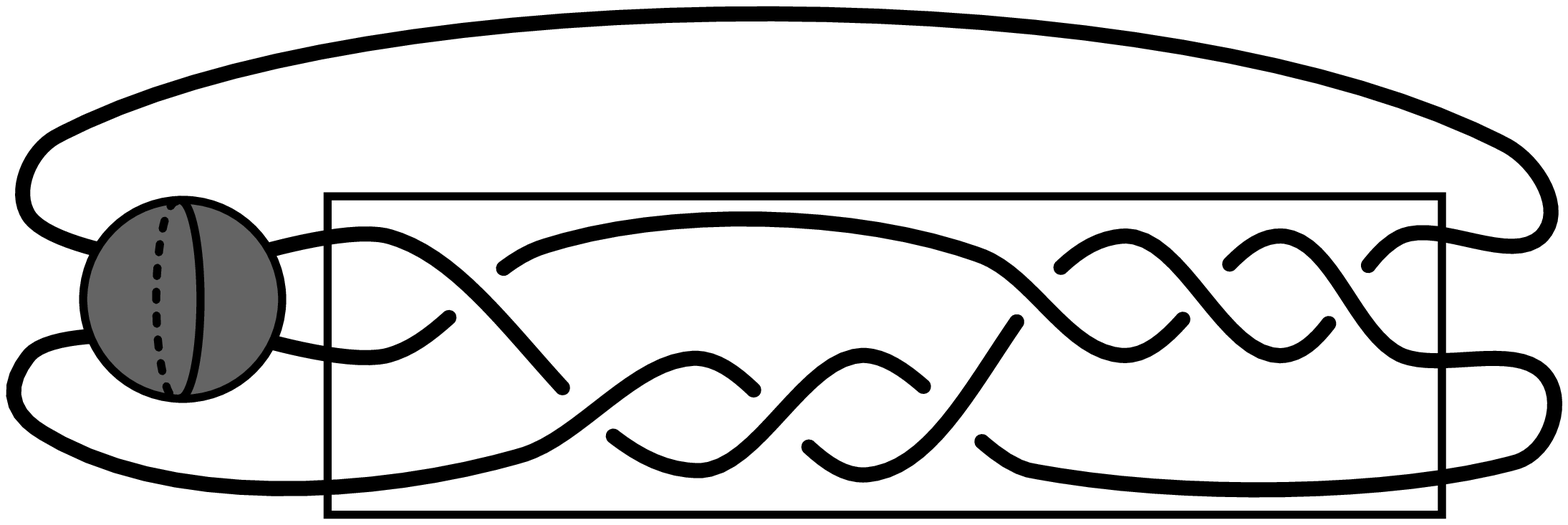}}
\end{center}
\caption{The closure $\tau(n)$ (left) giving rise to the branch set for integer surgeries (that is, Dehn fillings along slopes $n\alpha+\beta$), and the closure $\frac{13}{10}=[1,3,3]$ (right) corresponding to $13\alpha+10\beta$ Dehn filling in the cover.}
\label{fig:fraction}
\end{figure}

As a result, $\det(\tau(r))=c_M\Delta(p\alpha+q\beta,\lm)$ where $\Delta$ measures the minimal geometric intersection number between two slopes, $\lm$ is the rational longitude, and $c_M$ is some fixed constant depending only on $M$ \cite[Lemma 3.2]{Watson2008}. This is due to the fact that $\det(\tau(r))=|H_1(\Br(S^3,\tau(r));\bZ)|$ whenever $\Br(S^3,\tau(r))$ is a $\bQ$-homology sphere, and $\det(\tau(r))=0$ otherwise. Recall that the rational longitude is characterized by the property that some finite collection of like oriented copies of $\lm$ in $\partial M$ bound an essential surface in $M$. As a result, if $H_1(M;\bZ)\cong H\oplus\bZ$ for some finite abelian group $H$, then $c_M=\operatorname{ord}_{H}i_*(\lm)|H|$ where $i_*$ is the homomorphism induced by the inclusion $i\co\partial M\into M$. 

A particular useful class of examples is provided by strongly invertible knots. Recall that a knot $K$ is strongly invertible if there is an involution on the complement $S^3\smallsetminus\nu(K)$ that fixes a pair of arcs meeting the boundary torus transversally in exactly 4 points. It is an easy exercise to show that the quotient of such an involution is always a three-ball, and by keeping track of the image of the fixed point set in the quotient, $\tau$, such a knot complement is always the two-fold branched cover of a tangle $T=(B^3,\tau)$. In particular, we have the notation $S^3_{r}(K)=\Br(S^3,\tau(r))$ for $r$-surgery on the strongly invertible knot $K$, once a representative compatible with the preferred basis $\{\mu,\lambda\}$ (provided by the knot meridian $\mu$ and the Seifert longitude $\lambda$) has been fixed.

\section{Quasi-alternating tangles}\label{sec:QA-tangles}

\begin{definition} A quasi-alternating tangle $T=(B^3,\tau)$ admits a representative of the homeomorphism class of the pair with the property that $\tau(\overzero)$ and $\tau(0)$ are quasi-alternating links. Such a representative, when it exists, will be referred to as a quasi-alternating framing for $T$.\end{definition}

Note that $\tau(\overzero)$ and $\tau(0)$ are quasi-alternating if and only if the mirrors, $\tau(\overzero)^\star$ and $\tau(0)^\star$, are quasi-alternating. As a result the tangle $T$ is quasi-alternating if and only if $T^\star$ is a quasi-alternating tangle. This fact allows us to pass to the mirror and restrict to positive surgeries whenever convenient.   

\begin{theorem}\label{thm:main-qa} If $T$ is a quasi-alternating tangle then, up to taking mirrors and/or renaming $\gamma_{\overzero}$ and $\gamma_0$, the links $\tau(r)$ are quasi-alternating for all rational numbers $r\ge0$, where $T=(B^3,\tau)$ is a quasi-alternating framing. \end{theorem}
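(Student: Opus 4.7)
The strategy is an induction on the Stern--Brocot complexity of the non-negative rational $r=p/q$, measured by $p+q$ with $\gcd(p,q)=1$ and $p,q\ge 0$. The base cases $p+q=1$ are $r=0/1$ and $r=1/0$, giving the links $\tau(0),\tau(\overzero)$ that lie in $\sQ$ by hypothesis.

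First I arrange the framing so that the determinant formula from \cite[Lemma 3.2]{Watson2008} becomes additive across Farey triangles in $[0,\infty]$. Writing $\lm=a\alpha+b\beta$, the assumption $\tau(0),\tau(\overzero)\in\sQ$ forces $\det(\tau(0))=c_M|a|\neq 0$ and $\det(\tau(\overzero))=c_M|b|\neq 0$, hence $a,b\neq 0$. The two permitted symmetries act on the ratio $a/b$ as $a/b\mapsto -a/b$ (mirror, which negates the surgery slope) and $a/b\mapsto b/a$ (renaming $\gamma_{\overzero}\leftrightarrow\gamma_0$, swapping the roles of $\alpha$ and $\beta$), so the Klein-four orbit $\{\pm a/b,\pm b/a\}$ always contains a negative element. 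I may therefore assume $a/b<0$, i.e.\ $a$ and $b$ have opposite signs. Then for any Farey neighbors $r_1=p_1/q_1<r_2=p_2/q_2$ in $[0,\infty]$ with mediant $r=(p_1+p_2)/(q_1+q_2)$, the quantities $p_1b-q_1a$ and $p_2b-q_2a$ share the sign of $b$, so
\[
\det(\tau(r))=c_M\bigl|(p_1{+}p_2)b-(q_1{+}q_2)a\bigr|=c_M|p_1b-q_1a|+c_M|p_2b-q_2a|=\det(\tau(r_1))+\det(\tau(r_2)).
\]

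For the inductive step, let $r=p/q$ with $p,q\ge 1$ and $p+q\ge 2$, and let $r_1,r_2$ denote the two Stern--Brocot parents of $r$; each has strictly smaller complexity, so $\tau(r_1),\tau(r_2)\in\sQ$ by induction. The geometric fact I need is that $\tau(r)$ admits a distinguished crossing -- the outermost crossing of the rational tangle attached to $T$ -- whose two smoothings are precisely $\tau(r_1)$ and $\tau(r_2)$. Writing $r=[a_1,\ldots,a_\ell]$ with $a_\ell\ge 2$, this reduces to the standard continued-fraction identification of the Stern--Brocot parents of $r$ as $[a_1,\ldots,a_\ell-1]$ and $[a_1,\ldots,a_{\ell-1}]$, together with direct inspection of the twist-region diagram (the integer case $r=n$ already exhibits the pattern: resolving one crossing of the $n$-twist region gives $\tau(n-1)$ and $\tau(\overzero)$, the Stern--Brocot parents of $n$). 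The determinant identity above, the inductive hypothesis, and the mirror-invariance of $\sQ$ (to absorb any sign discrepancy with the $\positive$ convention of Definition~\ref{def:qa}) then give $\tau(r)\in\sQ$, completing the induction.

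The main obstacle is the geometric step: confirming that the outermost crossing of the rational tangle summand really resolves to the Stern--Brocot parents, and doing so uniformly across all continued-fraction expansions. This is a continued-fraction book-keeping exercise on the standard diagram, and reducing first to $r=n$ and then peeling off twist regions one at a time looks like the cleanest route. Everything else is forced by the determinant formula under the arrangement $a/b<0$.
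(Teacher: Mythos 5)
Your argument is essentially the paper's own proof: normalize by mirroring/renaming so that $\alpha\cdot\lm$ and $\beta\cdot\lm$ are both positive (your condition $a/b<0$), making $\det(\tau(r))=c_M\Delta(p\alpha+q\beta,\lm)$ additive, and then induct using the fact that resolving the final crossing of the attached rational tangle produces the branch sets for $[a_1,\ldots,a_{\ell-1}]$ and $[a_1,\ldots,a_\ell-1]$. Your only departure is organizational -- a single induction on $p+q$ via Stern--Brocot parents in place of the paper's nested induction on $n$ and on the length of the continued fraction -- and the geometric step you flag as the main obstacle is exactly the one the paper delegates to \cite[Section 3.4]{Watson2008}.
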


\begin{proof}[Sketch of proof]
This follows from \cite[Theorem 4.7 and Remark 4.5]{Watson2008}, but we sketch the argument here for the reader's convenience.

First notice that, up to taking mirrors, $\det(\tau(1))=\det(\tau(\overzero))+\det(\tau(0))$. This results from the observation that $\det(\tau(1))=c_M\Delta(\alpha+\beta,\lm)=c_m(\Delta(\alpha,\lm)+\Delta(\beta,\lm))$, where $M=\Br(B^3,\tau)$. As a result, $\tau(1)$ must be quasi-alternating. Next notice that, up to renaming $\gamma_{\overzero}$ and $\gamma_0$ (thereby renaming the pair of slopes $\alpha$ and $\beta$ in the cover), we have that $\det(\tau(n))=n\det(\tau(\overzero))+\det(\tau(0))$ for all $n\ge0$ by induction on $n$. 

The choices above ensure that we may orient $\partial M$ so that $\alpha\cdot\beta=+1$, $\alpha\cdot\lm>0$ and $\beta\cdot\lm>0$. With these choices fixed, the result follows from a second induction on $\ell$, the length of the continued fraction expansion of $r=[a_1,\ldots,a_\ell]$, applying the fact that $\det(\tau(r))=c_M\Delta(p\alpha+q\beta,\lm)$ where $r=\pq$. In particular, resolving the final crossing of the continued fraction gives $\det(\tau(r))=\det(\tau(r_0))+\det(\tau(r_1))$ as required, where $r_0$ and $r_1$ correspond to the $0$- and $1$-resolutions specified by the continued fractions $[a_1,\ldots,a_{\ell-1}]$ and $[a_1\ldots,a_{\ell}-1]$ (see \cite[Section 3.4]{Watson2008}).\end{proof} 

\begin{example} Any alternating projection of a non-split alternating link $L$ gives rise to a quasi-alternating tangle: it suffices to form a tangle by removing two arcs of $L$ in such a way that the crossings adjacent to the endpoints of the tangle alternate as they are traversed in order around the diagram of the tangle. See \fullref{fig:trefoil-and-tangle}, for example. As a result, any alternating link gives rise to an infinite family of quasi-alternating links by applying \fullref{thm:main-qa}.\end{example}

\begin{figure}[ht!]
\begin{center}
\labellist \small
	\pinlabel $o$ at 518 478
	\pinlabel $u$ at 459 474
	\pinlabel $o$ at 460 287
	\pinlabel $u$ at 517 282
\endlabellist
\raisebox{0pt}{\includegraphics[scale=0.25]{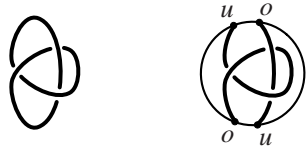}}
\end{center}
\caption{An alternating projection of the trefoil (left) gives rise to a quasi-alternating tangle (right). Notice that both $\tau(\overzero)$ and $\tau(0)$ are alternating links.}
\label{fig:trefoil-and-tangle}
\end{figure}

For the particular instance of this example shown in \fullref{fig:trefoil-and-tangle}, the resulting quasi-alternating tangle is a rational tangle. As a result, the two-fold branched cover of the tangle is a solid torus, and this gives an alternative view on why this particular tangle is quasi-alternating: since every closure of this tangle corresponds to a surgery on the trivial knot in the two-fold branched cover, it follows from work of Hodgson and Rubinstein  that the resulting branch sets are always two-bridge links \cite{HR1985}. These are non-split alternating links -- and hence quasi-alternating -- for all but the zero surgery.

\fullref{fig:two-framings} shows second quasi-alternating tangle arising in this way that is not a rational tangle.

\begin{figure}[ht!]
\begin{center}
\raisebox{0pt}{\includegraphics[scale=0.25]{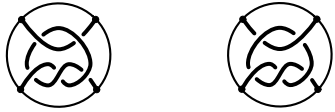}}
\end{center}
\caption{Two quasi-alternating framings for a given tangle: notice that these tangles differ by a homeomorphism that does not fix the boundary. This tangle is two-fold branched covered by the complement of the right-hand trefoil (see \fullref{sec:lens} and, more generally, \fullref{sec:QA-Montesinos}) as demonstrated in \cite[Figure 5]{Watson2008}, for example.}
\label{fig:two-framings}
\end{figure}

Further examples arise naturally in the context of Dehn surgery on a strongly invertible knots more generally, and this is explored in the following sections.  

\section{Lens space surgeries and quasi-alternating links}\label{sec:lens}

Moser showed that the positive $(p,q)$-torus knot yields a lens space for $(pq-1)$-surgery \cite{Moser1971}. More generally, Berge gives a conjecturally complete list of knots admitting lens space surgeries that includes torus knots \cite{Berge}, and results due to Schreier \cite{Schreier1924} and Osborne \cite{Osborne1981} show that every such knot is strongly invertible by virtue of having Heegaard genus at most 2. That is, up to mirrors, there is a positive integer $N$, depending on $K$, for which $S^3_N(K)$ is a lens space whenever $K$ is a knot in Berge's list. 

As a result, the quotient tangle $T=(B^3,\tau)$ compatible with $\{\mu,N\mu+\lambda\}$ is a framed quasi-alternating tangle: $\tau(\overzero)$ is unknotted while $\tau(0)$ must be a non-split two-bridge link due to work of Hodgson and Rubinstein \cite{HR1985}. Therefore every Berge knot complement may be viewed as the two-fold branched cover of a quasi-alternating tangle, and in summary we have the following immediate application of \fullref{thm:main-qa} (c.f. \cite[Proposition 4.8]{Watson2008}):

\begin{proposition}\label{prp:berge} Up to mirrors, every Berge knot has property {\bf QA}. \end{proposition}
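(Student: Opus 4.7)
The plan is to apply \fullref{thm:main-qa} to the quotient tangle of a Berge knot's strong involution, with the framing chosen so that both tangle closures are visibly quasi-alternating. By the results of Schreier and Osborne cited in the excerpt, every Berge knot has Heegaard genus at most $2$ and is therefore strongly invertible, so $S^3\smallsetminus\nu(K)\cong\Br(B^3,\tau)$ for some tangle $T=(B^3,\tau)$. After replacing $K$ by its mirror if necessary, fix a positive integer $N$ for which $S^3_N(K)$ is a lens space (possible because $K$ is a Berge knot), and choose a representative of $T$ compatible with the basis $\{\alpha,\beta\}=\{\mu,N\mu+\lambda\}$ of $\partial\nu(K)$.

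Next I would verify that this is a quasi-alternating framing. Filling along $\alpha=\mu$ returns $S^3$, so $\tau(\overzero)$ is the unknot, which lies in $\sQ$ by definition. Filling along $\beta=N\mu+\lambda$ produces the lens space $S^3_N(K)$; by Hodgson and Rubinstein \cite{HR1985}, every lens space arises as a two-fold branched cover of $S^3$ only with a two-bridge branch set, and since $\det(\tau(0))=|H_1(S^3_N(K);\bZ)|=N>0$ this two-bridge link is non-split, hence alternating, hence in $\sQ$ by \cite[Lemma 3.2]{OSz2005-branch}.

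With $T$ now a quasi-alternating tangle, \fullref{thm:main-qa} supplies $\tau(r)\in\sQ$ for every rational $r\geq 0$. The slope $p\alpha+q\beta=(p+qN)\mu+q\lambda$ corresponds to surgery coefficient $r+N$ when $r=p/q$, so letting $r$ range over $[0,\infty]$ recovers every rational surgery coefficient $\geq N$; thus $S^3_s(K)\cong\Br(S^3,\tau(r))$ is the two-fold branched cover of a quasi-alternating link for every rational $s\geq N$, which is property \QA. The only delicate point is bookkeeping rather than content: \fullref{thm:main-qa} permits swapping $\gamma_{\overzero}$ and $\gamma_0$, in which case the surgery coefficients take the form $N+q/p$ rather than $N+p/q$, but either half-line still exhausts the rational slopes $\geq N$, and the "up to mirrors" clause in the statement absorbs the orientation choice made in selecting $N$.
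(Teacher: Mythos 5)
Your proposal is correct and follows the same route as the paper: strong invertibility via Schreier--Osborne, the quotient tangle framed compatibly with $\{\mu,N\mu+\lambda\}$ so that $\tau(\overzero)$ is unknotted and $\tau(0)$ is a non-split two-bridge link by Hodgson--Rubinstein, and then an application of \fullref{thm:main-qa}. The extra bookkeeping you include about the correspondence between slopes $p\alpha+q\beta$ and surgery coefficients $\ge N$ is a detail the paper leaves implicit, but the argument is the same.
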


\section{Pretzels admitting L-space surgeries}

Let $P_q$ denote the $(-2,3,q)$-pretzel (for odd positive integers $q$); $P_7$ is illustrated in \fullref{fig:237-pretzel}. It is known that each $P_q$ admits L-space surgeries \cite{OSz2005-lens}. Indeed, $P_1$, $P_3$ and $P_5$ are the $(5,2)$-, $(3,4)$- and $(3,5)$- torus knots, respectively, and $P_7$ is a non-torus Berge knot (it was demonstrated by Fintushel and Stern that this hyperbolic knot admits lens space surgeries \cite{FS1980}). In general $P_q$ is hyperbolic when $q>5$, however $P_q$ does not admit finite fillings (in particular, does not admit lens space surgeries) for $q>9$ \cite{Mattman2000}.

\begin{figure}[ht!]
\begin{center}
\raisebox{0pt}{\includegraphics[scale=0.25]{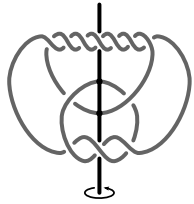}}
\end{center}
\caption{The $(-2,3,7)$-pretzel knot $P_7$ with axis for the strong inversion labelled.}
\label{fig:237-pretzel}\end{figure}

Since the knot $P_q$ is strongly invertible, surgeries on $P_q$ are two-fold branched covers of $S^3$. Moreover, we have the following:

\begin{theorem}\label{thm:pretzels} The knot $P_q$ has property {\bf QA} for ever odd positive integer $q$.\end{theorem}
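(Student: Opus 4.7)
The strategy is to apply \fullref{thm:main-qa} to the tangle $T_q = (B^3, \tq)$ obtained by quotienting $P_q$ by its strong inversion, whose axis is indicated in \fullref{fig:237-pretzel}. The Montesinos trick identifies $S^3_r(P_q)$ with $\Br(S^3, \tq(r))$ once a compatible framing of $T_q$ is chosen. To witness property \QA\ with threshold $N$, it suffices to exhibit a representative of $T_q$ whose preferred pair of slopes in the cover is $\{\mu, N\mu+\lambda\}$ and whose two basic closures $\tq(\overzero)$ and $\tq(0)$ are both quasi-alternating: \fullref{thm:main-qa} in this basis then yields a quasi-alternating branch set for every rational surgery slope at least $N$, since integer-level closures $n\alpha + \beta$ correspond to surgeries of slope $n + N$ on $P_q$.

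For $q \in \{1, 3, 5, 7\}$, the knot $P_q$ is a Berge knot and so is already covered by \fullref{prp:berge}; the substantive case is $q \ge 9$. For every such $q$ the $\mu$-filling returns $S^3$, so the closure $\tq(\overzero)$ is automatically the unknot, which is trivially quasi-alternating. The content of the proof is therefore to choose a positive integer $N$ depending on $q$ and to verify that the $N$-surgery branch set $\tq(0)$ is quasi-alternating. I would produce an explicit diagram of $T_q$ by taking the quotient of a symmetric projection of $P_q$ across the involution axis: each of the three pretzel boxes descends to a controlled rational tangle, and the extra $N$-framing twist is recorded by the change of basis from $\{\mu, \lambda\}$ to $\{\mu, N\mu + \lambda\}$ in the cover.

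A natural target for $N$ is the smallest slope producing an L-space surgery on $P_q$, since such surgeries are known to yield small Seifert fibered spaces by results of \OSz; the expectation is that the corresponding branch set is a Montesinos link with an alternating projection, hence quasi-alternating by \cite[Lemma 3.2]{OSz2005-branch}. The main obstacle is to make this identification effective and uniform in $q$: one must simplify the induced diagram of $\tq(0)$ until its alternating character is manifest, despite the fact that for $q \ge 9$ the cover is no longer a lens space and so the Hodgson--Rubinstein classification used in \fullref{prp:berge} is unavailable. If a clean alternating form cannot be read off directly, the fall-back is to apply the determinant identity of \fullref{def:qa} at the distinguished crossing arising from the final framing twist, reducing the quasi-alternation of $\tq(0)$ to that of two simpler closures at smaller surgery slopes, which should eventually be two-bridge and hence alternating.
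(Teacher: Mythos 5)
Your skeleton is the right one and matches the paper's framework: quotient by the strong inversion, pass to a representative of the quotient tangle compatible with $\{\mu,N\mu+\lambda\}$, observe that $\tq(\overzero)$ is the unknot, and then feed everything into \fullref{thm:main-qa} so that only the quasi-alternation of the single branch set $\tq(0)$ (the $N$-surgery branch set) remains to be checked. But that remaining check is the entire content of the theorem for $q\ge 9$, and your proposal leaves it as an expectation rather than a proof. The two routes you sketch for it do not go through. First, the hope that the branch set of the smallest L-space slope is an alternating Montesinos link is unjustified: the branch set is determined by the tangle and the chosen closure, not merely by the homeomorphism type of the cover, and for $q>9$ the knot $P_q$ admits no lens space or finite fillings, so the Hodgson--Rubinstein argument that rescues this kind of claim in \fullref{prp:berge} is unavailable; indeed the branch sets that actually arise here are quasi-alternating without being alternating (one of them is even the composite knot $3_1\#4_1$ sitting on a non-composite diagram). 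Second, the fall-back recursion --- resolving the last framing crossing to reduce to branch sets of smaller surgery slopes --- has no base case: descending through integer slopes on $P_q$ with $q\ge 11$ you leave the L-space range, and those branch sets cannot be quasi-alternating since their double covers are not L-spaces, so the recursion can never terminate at ``two-bridge, hence alternating'' links.

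What the paper does instead, and what your proposal is missing, is a uniform-in-$q$ device: it chooses the surgery coefficient to vary with $q$, namely $N=2q+5$, so that the framing twists in the quotient tangle $T_q$ are cancelled against the twists coming from the pretzel parameter, and the whole family $\{\tau_q(2q+5)\}_{q\ \mathrm{odd}}$ is realized as the integer closures $\tau'(n)$ of one fixed auxiliary tangle $T'$ (with $\tau'(n)=\tau_{2n+3}(4n+11)$). For that single tangle the two basic closures are identified concretely --- $\tau'(\overzero)$ is the $(2,4)$-torus link with determinant $4$ and $\tau'(0)$ is the two-bridge knot $7_2$ with determinant $11$ --- so $T'$ is a quasi-alternating framed tangle and every $\tau'(n)$, i.e.\ every $\tau_q(2q+5)$, is quasi-alternating; only then does the re-framing of $T_q$ by $\{\mu,(2q+5)\mu+\lambda\}$ together with \fullref{thm:main-qa} give property \QA\ with threshold $2q+5$. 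Without some such explicit identification (or another verifiable certificate of quasi-alternation for one positive slope per $q$), your argument establishes the reduction but not the theorem.
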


\begin{proof} Since $P_1$ is a torus knot, the case $q=1$ (and, indeed, the cases $q=3,5,7$) follows from \fullref{prp:berge}. To prove the theorem, assume without loss of generality that $q>1$. We first need to construct the associated quotient tangle for each $P_q$. This amounts to choosing a fundamental domain for the action of the involution on the knot complement (illustrated in \fullref{fig:237-pretzel}), and is done in \fullref{fig:23q-tangle}. 

\begin{figure}[ht!]
\begin{center}
\labellist \small
	\pinlabel $\frac{q-1}{2}$ at 234 537
\endlabellist
\raisebox{0pt}{\includegraphics[scale=0.25]{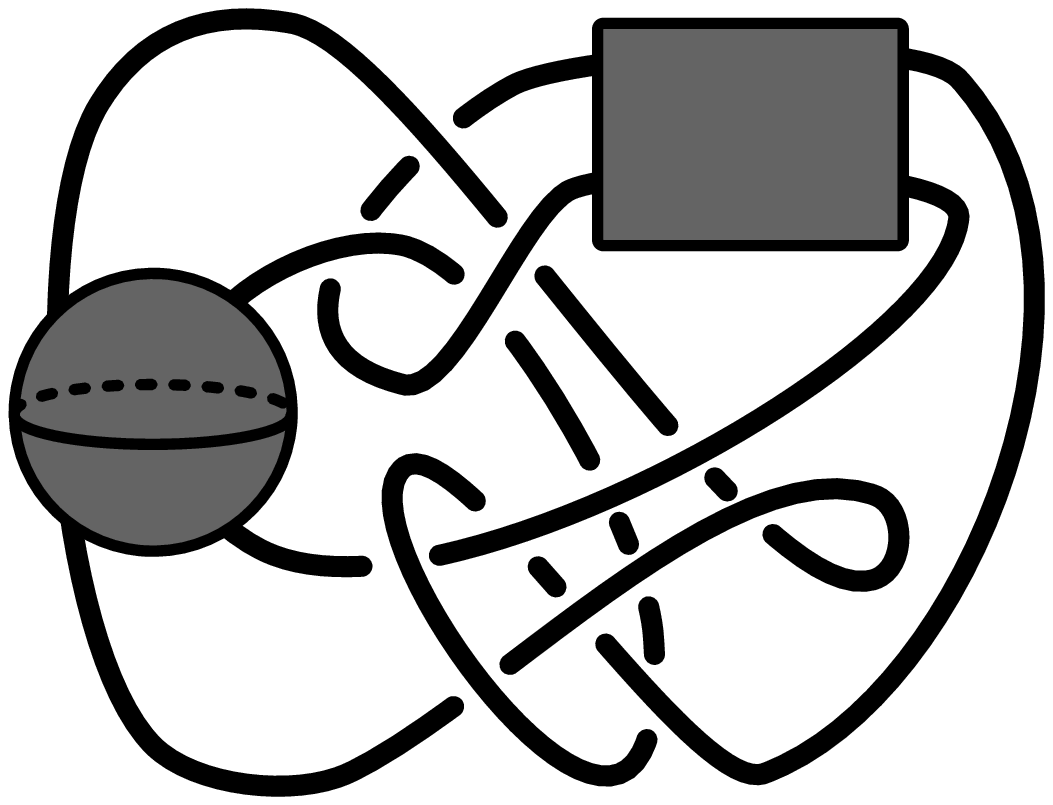}}\qquad
\labellist \small
	\pinlabel $\frac{q-1}{2}$ at 357 542
\endlabellist
\raisebox{0pt}{\includegraphics[scale=0.25]{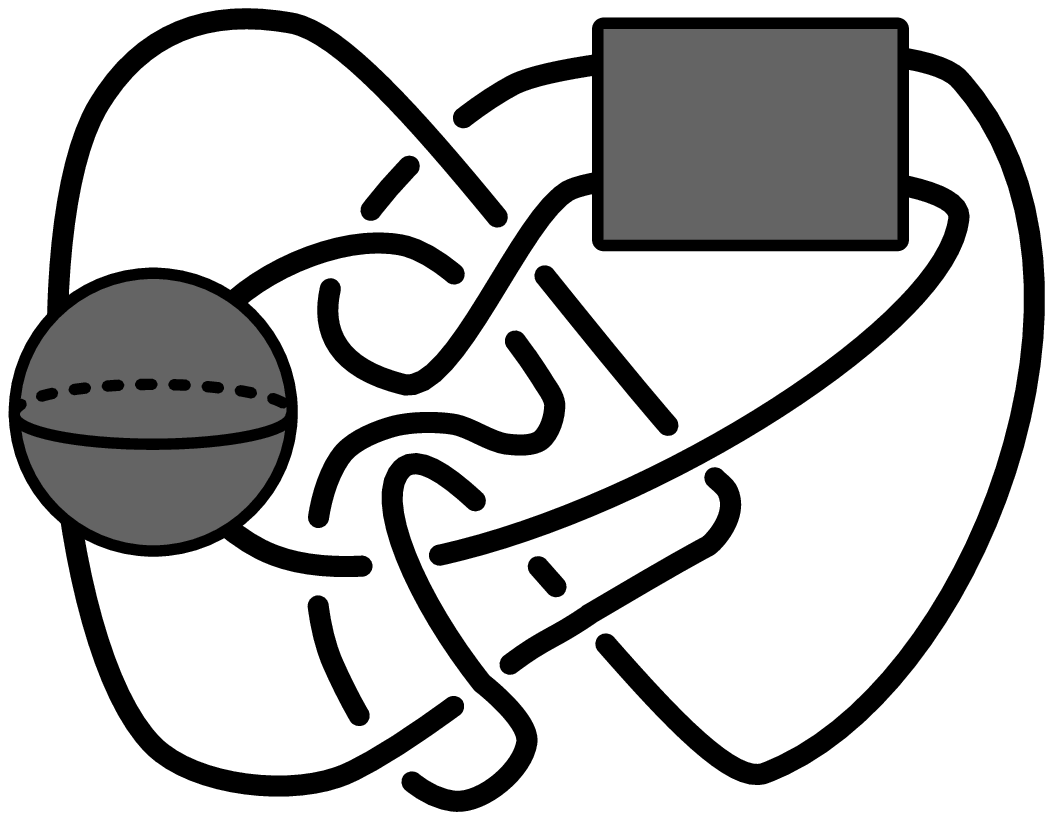}}\qquad
\labellist \small
	\pinlabel $\frac{q-3}{2}$ at 315 515
\endlabellist
\raisebox{0pt}{\includegraphics[scale=0.25]{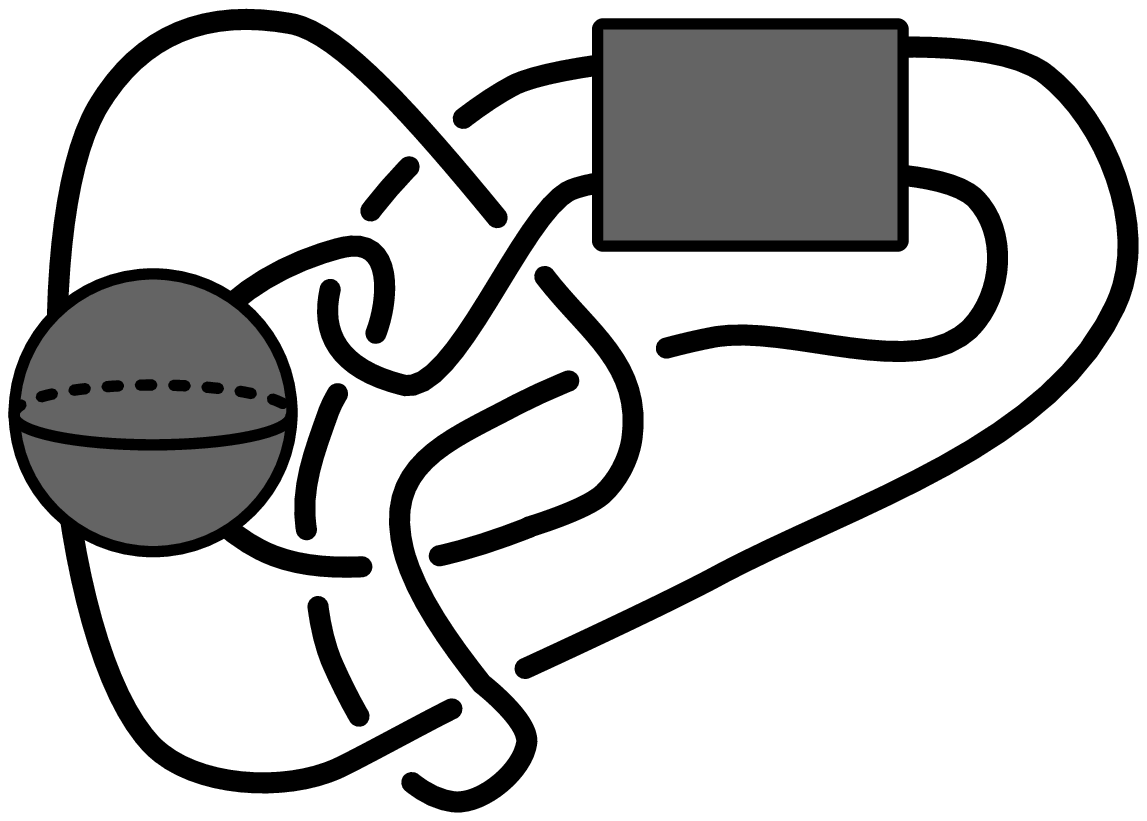}}
\end{center}
\caption{Simplifying the tangle resulting from the quotient of $P_q$ by the strong inversion. Notice that the last step involves a homeomorphism that does not fix the boundary sphere. }
\label{fig:23q-tangle}\end{figure}

The preferred framing for this tangle, which we denote by $T_q=(B^3,\tau_q)$, is shown in \fullref{fig:23q-framed}. To see this, it suffices to check the image of the longitude in the quotient of $P_3$, then observe that the framing changes by 4 half-twists in the quotient when $q$ is replaced by $q+2$.  In particular, notice that the branch set associated to $S^3_{2q+5}(P_q)\cong\Br(S^3,\tau_q(2q+5))$ has a fixed number of twists corresponding to the framing, so that only the twists of the second box vary in $q$ for this family of branch sets.  

\begin{figure}[ht!]
\begin{center}
\labellist 
	\small 
	\pinlabel $\frac{q-3}{2}$ at 374 435
	\scriptsize
	\pinlabel $-2q\!-\!7$ at 171 435
\endlabellist
\raisebox{0pt}{\includegraphics[scale=0.25]{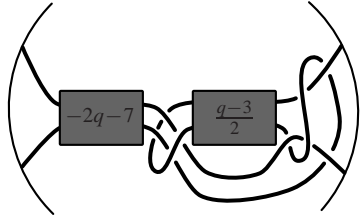}}\qquad
\end{center}
\caption{The preferred representative $T_q=(B^3,\tau_q)$ of the associated quotient tangle for the knot $P_q$. Note that the twists in the left-most box correspond to the framing, while adding a positive half-twist in the right-most box corresponds to exchanging the knot $P_q$ for $P_{q+2}$ in the cover.}
\label{fig:23q-framed}\end{figure}

We claim that the branch sets $\tau_q(2q+5)$ are quasi-alternating. To see this, consider the tangle $T'=(B^3,\tau')$ shown in \fullref{fig:23q-qa}. This tangle has the property, by construction, that $\tau'(n)=\tau_{2n+3}(4n+11)=\tau_q(2q+5)$, and as such $\det(\tau'(n))=4n+11$.

\begin{figure}[ht!]
\begin{center}
\raisebox{0pt}{\includegraphics[scale=0.25]{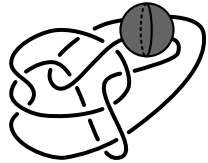}}
\end{center}
\caption{The quasi-alternating tangle $T'=(B^3,\tau')$ for which $\tau'(n)=\tau_{2n+3}(4n+11)=\tau_q(2q+5)$.}
\label{fig:23q-qa}\end{figure}

Moreover, as shown in \fullref{fig:412} the branch set $\tau'(\overzero)$ is the $(2,4)$-torus link hence $\det(\tau(\overzero))=4$. As a result, $\det(\tau(n))=n\det(\tau(\overzero))+11$. Notice that, as an alternating link, $\tau'(\overzero)$ is quasi-alternating. 

\begin{figure}[ht!]
\begin{center}
\raisebox{0pt}{\includegraphics[scale=0.25]{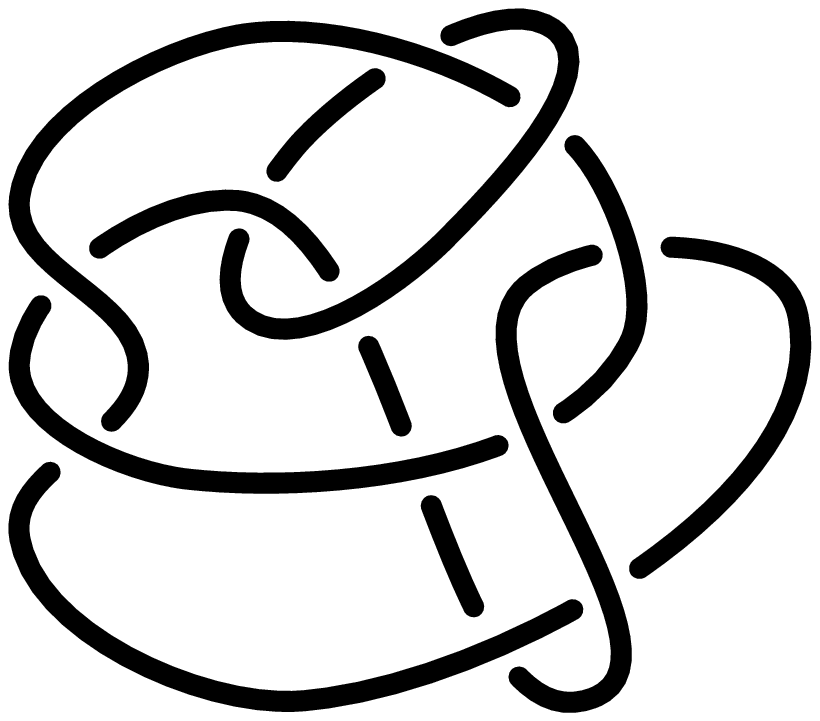}}\qquad
\raisebox{0pt}{\includegraphics[scale=0.25]{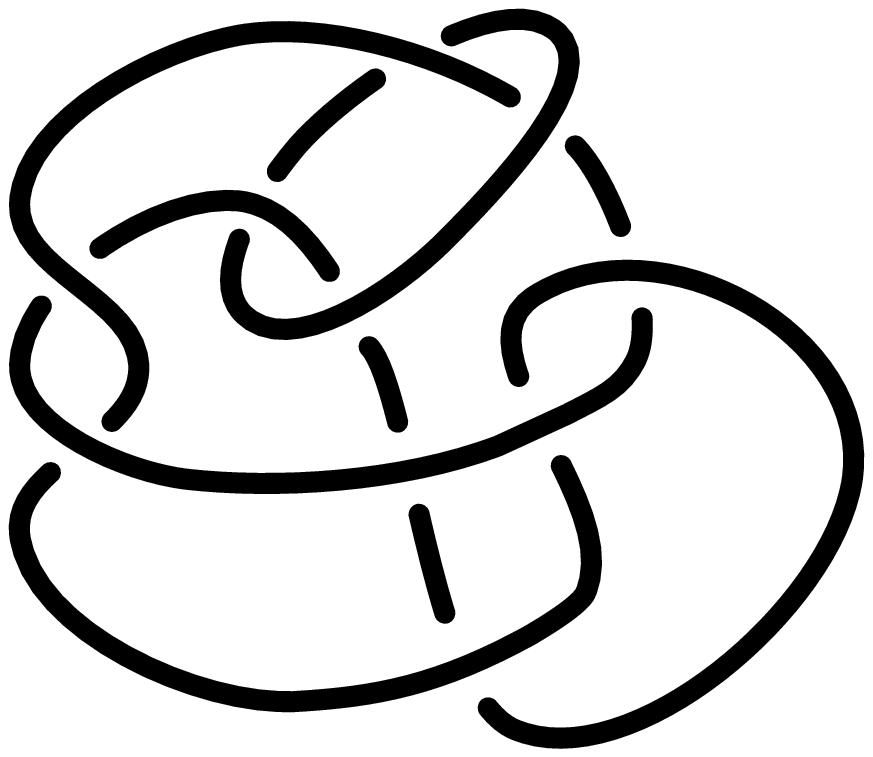}}\qquad
\raisebox{0pt}{\includegraphics[scale=0.25]{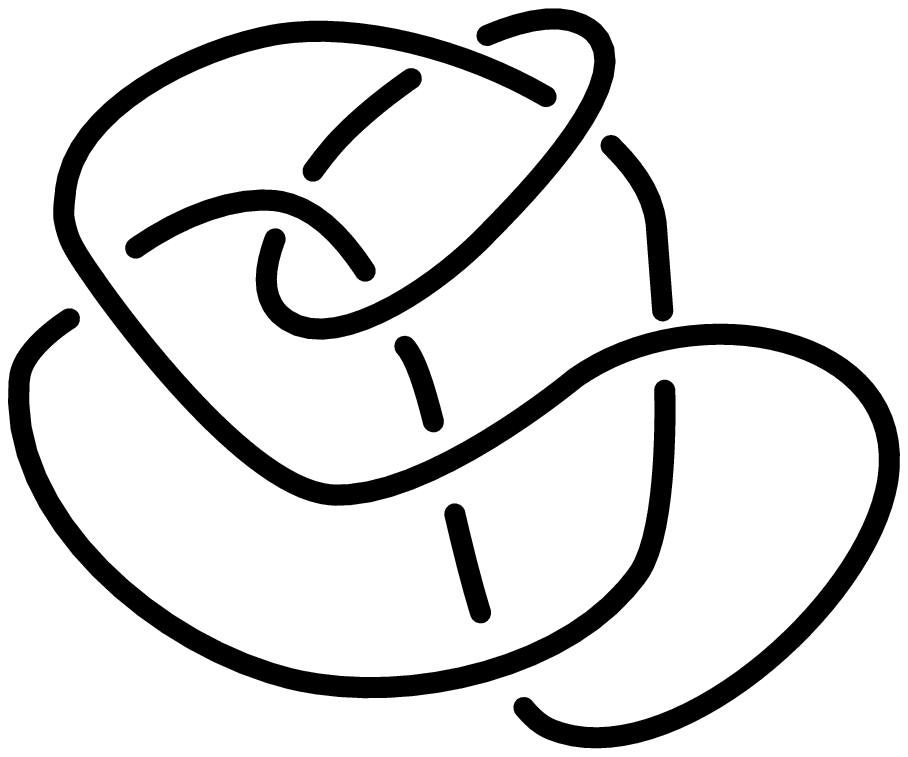}}\qquad
\raisebox{10pt}{\includegraphics[scale=0.25]{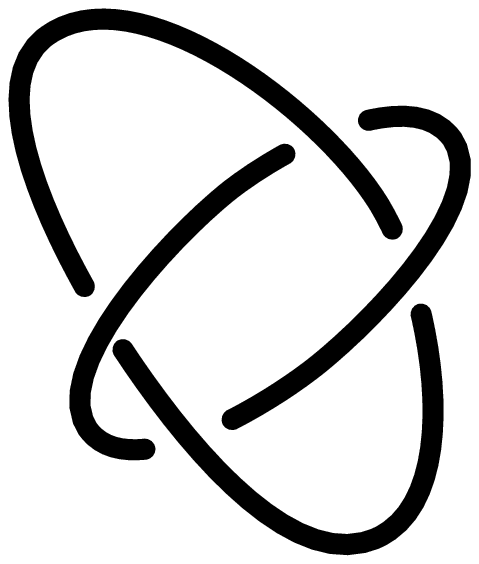}}
\end{center}
\caption{Isotopy of the link $\tau(\overzero)$ yields the $(2,4)$-torus link.}
\label{fig:412}\end{figure}

Therefore, if we can demonstrate that $\tau'(0)$ is quasi-alternating with $\det(\tau(0))=11$, we can conclude that $\tau'(n)$ is quasi-alternating for all $n\ge0$ (indeed, that $T'=(B^3,\tau')$ is a framed quasi-alternating tangle).   This is indeed the case: the isotopy in \fullref{fig:7_2} yields an alternating diagram for $\tau(0)\simeq 7_2$ and it is well known that this two-bridge knot has $\det(\tau(0))=11$. 

\begin{figure}[ht!]
\begin{center}
\raisebox{0pt}{\includegraphics[scale=0.25]{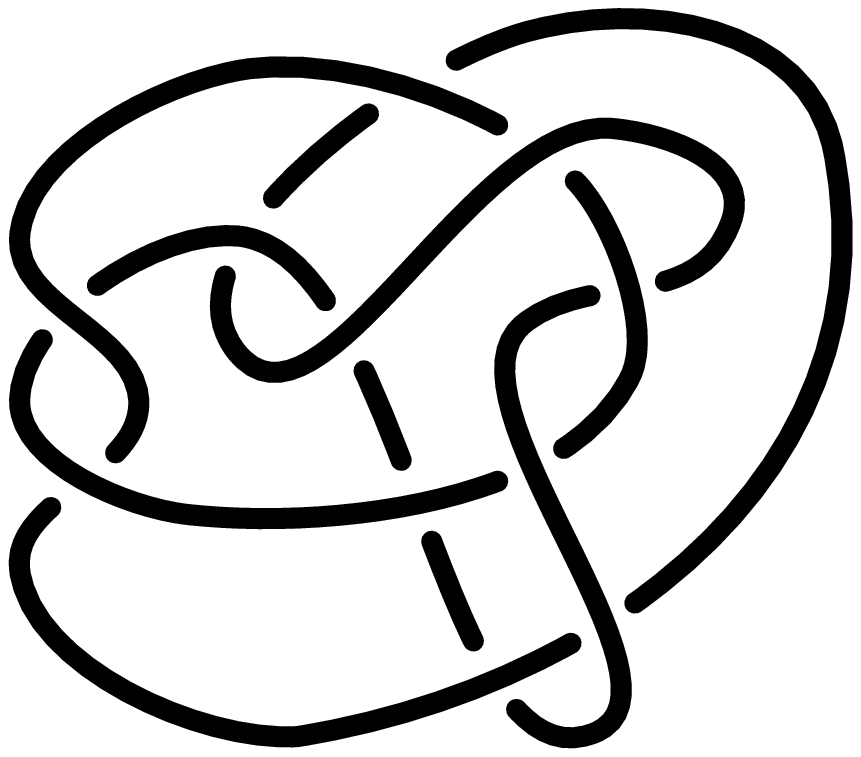}}\ \
\raisebox{-5pt}{\includegraphics[scale=0.25]{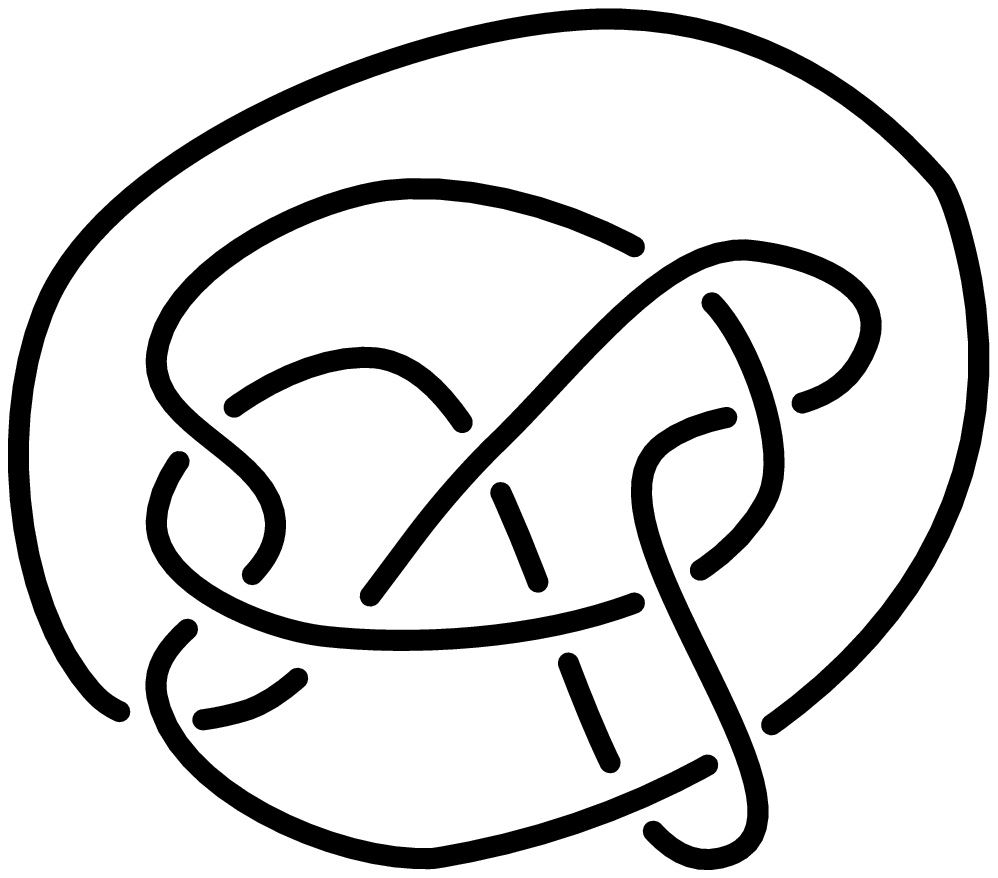}}\ \
\raisebox{-10pt}{\includegraphics[scale=0.25]{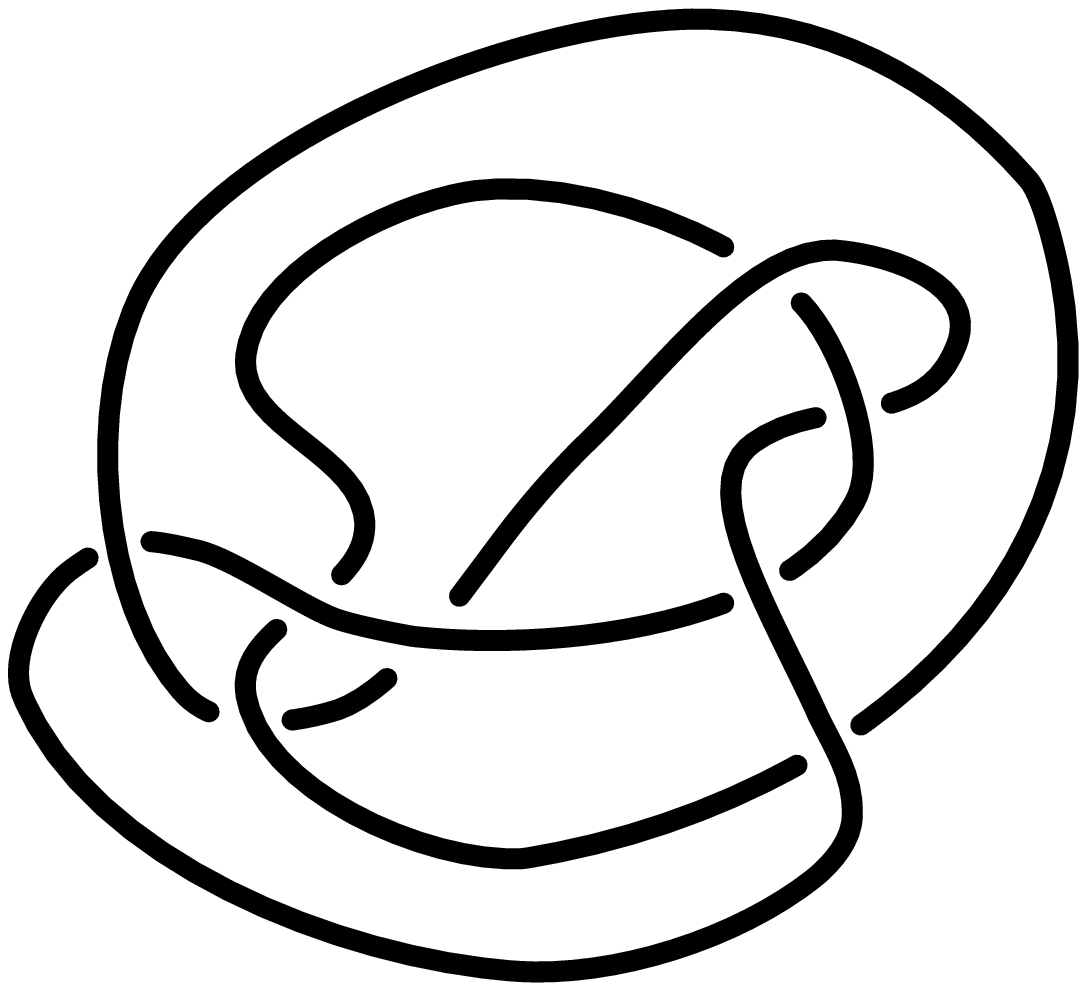}}\ \
\raisebox{3pt}{\includegraphics[scale=0.25]{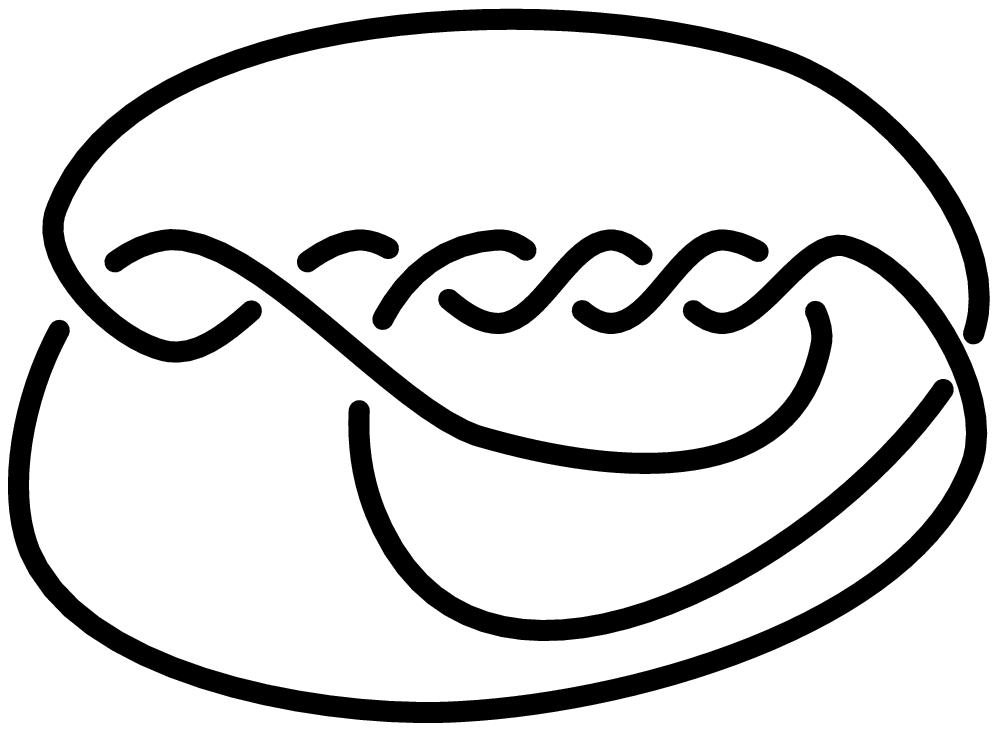}}\ \
\raisebox{5pt}{\includegraphics[scale=0.25]{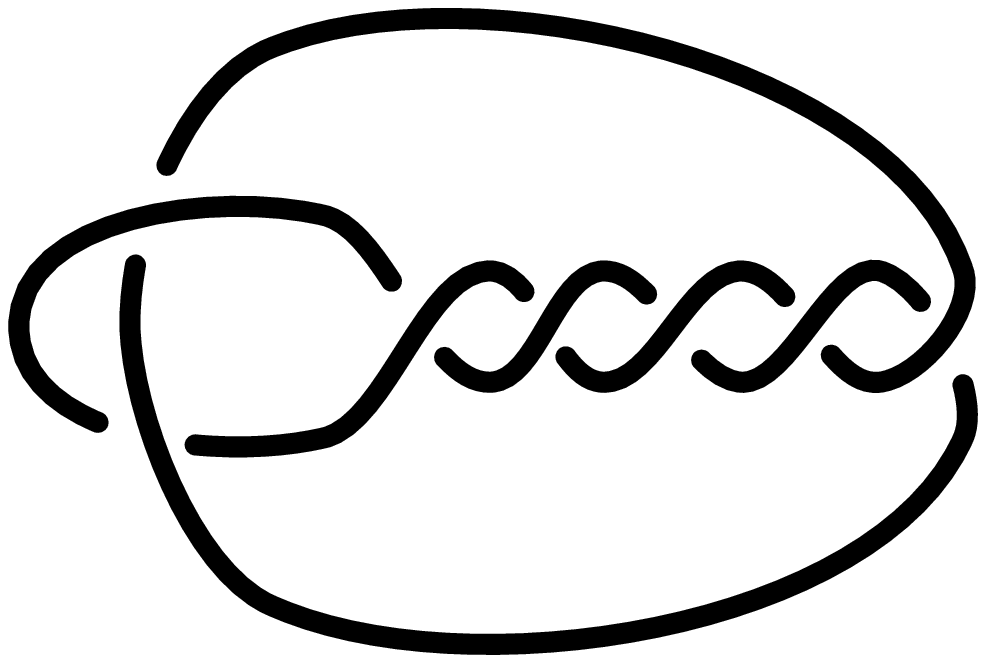}}
\end{center}
\caption{Isotopy of the knot $\tau(0)$ yields the knot $7_2$.}
\label{fig:7_2}\end{figure}

To complete the proof notice that we have shown, for each $q$, that there is a positive integer surgery for which the corresponding branch set is quasi-alternating. In particular, by fixing a representative for $T_q$ compatible with $\{\mu,(2q+5)\mu+\lambda\}$ we obtain a framed quasi-alternating tangle. Therefore, the manifold $S^3_{r}(P_q)$ is the two-fold branched cover of a quasi-alternating link for all rational numbers $r\ge 2q+5$ by applying \fullref{thm:main-qa}.  \end{proof}

We point out an interesting consequence of the construction used in the proof of \fullref{thm:pretzels}: there are composite, quasi-alternating knots realized as quasi-alternating on a non-composite diagram. Indeed, since $P_5$ is the $(3,5)$-torus knot, the manifold $S^3_{15}(P_5)$ is a connect sum of lens spaces \cite{Moser1971}. Therefore, the associated quasi-alternating branch set $\tau_5(15)=\tau'(1)$ must be a connect sum of two-bridge knots \cite{HR1985}. We have demonstrated that $\tau'(n)$ is quasi-alternating for all $n$, and leave it as a challenge to the interested reader to demonstrate that $\tau'(1)$ is the knot $3_1\# 4_1$. Notice that since both $\tau'(\overzero)$ and $\tau'(0)$ are non-trivial prime links, the resolved crossing must constitute an interaction between the connect summands of $\tau'(1)$. We point out that it is an open problem to show that $K_1\# K_2$ quasi-alternating implies that the $K_i$ are quasi-alternating, and this example suggests that an answer to this question might be subtle.  

\section{On Cabling}

Let $\sC_{q,p}(K)$ denote the $(q,p)$-cable of a knot $K$. By this convention, {\em sufficiently positive cabling} refers to taking sufficiently large $p$.

\begin{theorem}\label{thm:cables} Let $K$ be a knot with property {\bf QA}. Then all sufficiently positive cables of $K$ have property {\bf QA} as well. \end{theorem}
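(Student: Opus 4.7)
The plan is to mimic the strategy used for Berge knots (\fullref{prp:berge}) and for the pretzels $P_q$ (\fullref{thm:pretzels}): build the quotient tangle for $\sC_{q,p}(K)$, identify a single sufficiently large surgery whose branch set is visibly quasi-alternating, and then invoke \fullref{thm:main-qa} to promote this to property \QA\ for $\sC_{q,p}(K)$ itself.

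First I would extend the strong inversion. Because $K$ has property \QA\ it is strongly invertible, with quotient tangle $T_K=(B^3,\tau_K)$. The axis of the strong inversion can be arranged to pass straight through the tubular neighbourhood $\nu(K)$, where it is compatible with the natural $\pi$-rotation of the $(q,p)$-cabling pattern; the inversion therefore extends to $\sC_{q,p}(K)$ and respects the JSJ decomposition $E(\sC_{q,p}(K))=E(K)\cup_T CS_{q,p}$ into the knot complement and the cable space. Passing to quotients produces a quotient tangle $T_{q,p}=(B^3,\tau_{q,p})$ built by gluing a ``cable-space tangle'' $(S^2\times I,\tau_{CS})$ onto the spherical boundary of $T_K$. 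The tangle $\tau_{CS}$ records the branched double cover of the Seifert fibered cable space and can be drawn as a column of $q$ parallel strands carrying an alternating block of half-twists whose length grows with $p$. Qualitatively, $\tau_{q,p}$ is a fixed inner tangle ($\tau_K$) capped off by an arbitrarily long outer alternating block once $p$ is large.

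Second I would track the preferred basis. The cable meridian $\mu_c$ and Seifert longitude $\lambda_c$ on $\partial\nu(\sC_{q,p}(K))$ descend under the Montesinos trick to a compatible pair of arcs on $\partial B^3$, so that $r$-surgery on $\sC_{q,p}(K)$ corresponds to the closure $\tau_{q,p}(r)$. Standard analysis of surgery along the JSJ torus of a cable shows that, for $r\neq pq$, the manifold $S^3_r(\sC_{q,p}(K))$ is obtained by gluing $E(K)$ along slope $p/q$ to a small Seifert fibered space whose multiplicities depend on $q$ and $r-pq$. Since the slope on $K$ in this decomposition is always $p/q$, the hypothesis that the cabling be ``sufficiently positive'' can be made precise as $p/q\geq N_K$, where $N_K$ is the threshold from property \QA\ of $K$.

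The main obstacle, and the real content, lies in verifying that for such $p$ the two anchor closures $\tau_{q,p}(\overzero)$ and $\tau_{q,p}(0)$ of the framed tangle are quasi-alternating links. The strategy is to exploit the outer cable block: for $p$ sufficiently large, the alternating block in $\tau_{CS}$ is long enough that each closure admits a diagram that is alternating on the ``outside'' and closes off with a quasi-alternating closure of $\tau_K$ (obtained as a branch set of a surgery on $K$ which, by property \QA\ of $K$, is forced to be quasi-alternating) on the ``inside''. A crossing-resolution argument of exactly the type used in \fullref{thm:pretzels}, together with the determinant identity $\det(\tau_{q,p}(\overzero))+\det(\tau_{q,p}(0))=\det(\tau_{q,p}(1))$ automatic from \cite[Lemma 3.2]{Watson2008}, then builds quasi-alternating diagrams from the alternating outer block stacked on the quasi-alternating inner cap. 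The subtle ingredient is the explicit identification of the inner cap with a quasi-alternating closure of $\tau_K$; both ``sufficiently large $p$'' and the full strength of property \QA\ for $K$ enter here. Once $T_{q,p}$ is established as a framed quasi-alternating tangle, \fullref{thm:main-qa} immediately yields that $S^3_r(\sC_{q,p}(K))\cong\Br(S^3,\tau_{q,p}(r))$ is the two-fold branched cover of a quasi-alternating link for every rational $r$ above the framing threshold, which is precisely property \QA\ for $\sC_{q,p}(K)$.
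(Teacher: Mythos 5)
Your overall scheme (build the quotient tangle of the cable, exhibit one large surgery with quasi-alternating branch set, then apply \fullref{thm:main-qa}) is the right shape, but the step that carries all the content is missing, and the justification you give for the threshold is incorrect. The key unproved claim is that the anchor closure corresponding to a large integer filling of the cable is quasi-alternating. Your argument for this is a diagrammatic one: an ``alternating outer block'' from the cable-space tangle stacked on a ``quasi-alternating inner cap'' coming from $\tau_K$. Quasi-alternating links are not closed under this kind of stacking in any automatic way --- establishing it would require exactly the determinant-plus-resolution bookkeeping whose absence you flag as ``the subtle ingredient'' --- and, more importantly, the inner cap is a rational-tangle closure of $\tau_K$ (so that property \QA\ of $K$ applies) only for very special filling slopes, namely those for which the filled cable space is a solid torus. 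That identification is precisely Gordon's theorem: $S^3_{(kpq\pm1)/k}(\sC_{q,p}(K))\cong S^3_{(kpq\pm1)/(kq^2)}(K)$ \cite[Corollary 7.3]{Gordon1983}; in particular $S^3_{pq-1}(\sC_{q,p}(K))\cong S^3_{(pq-1)/q^2}(K)$, and $(pq-1)/q^2$ grows with $p$, which is where ``sufficiently positive'' and property \QA\ of $K$ actually enter. You never invoke this, and your substitute --- that for all $r\ne pq$ the surgered manifold is $E(K)$ glued ``along slope $p/q$'' to a Seifert piece, so that $p/q\ge N_K$ suffices --- does not work: in that decomposition $E(K)$ is not Dehn filled at all (its boundary is a JSJ torus), so property \QA\ of $K$, which is a statement about closed surgeries $S^3_s(K)$, gives you nothing there.

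There is also a framing issue: with the naive framing by $\{\mu_c,\lambda_c\}$ the closure $\tau_{q,p}(0)$ is the branch set of the longitudinal filling, which has $\det=0$ and cannot be quasi-alternating; one must instead take the representative compatible with $\{\mu,(pq-1)\mu+\lambda\}$, for which $\tau(\overzero)$ is the unknot (the $\mu$-filling is $S^3$) and $\tau(0)$ is the branch set of $(pq-1)$-surgery, quasi-alternating by the Gordon identification once $(pq-1)/q^2\ge N_K$ (using that the strong inversion on $K$ extends over the cable, so the quotient branch set is the one furnished by property \QA). This is in fact the paper's entire proof: it is purely surgery-theoretic and makes no reference to diagrams of the branch sets, so the elaborate tangle-decomposition and diagram-building machinery in your first and third paragraphs can be dispensed with once Gordon's formula is in hand.
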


 \begin{proof}
We begin by recalling a result due to Gordon \cite{Gordon1983}.
Let $r=\frac{kpq\pm1}{k}$. Then since the cable space corresponding to $\sC_{q,p}$ is Seifert fibered over an annulus with a single cone point of order $q$, $r$-surgery on $\sC_{q,p}(K)$ may be obtained via surgery on the original knot $K$: $S^3_{r}(\sC_{q,p}(K))\cong S^3_{r/q^2}(K)$ (see \cite[Corollary 7.3]{Gordon1983}). In particular, we have that $S^3_{pq-1}(\sC_{q,p}(K))\cong S^3_{(pq-1)/q^2}(K)$ so that $\frac{pq-1}{q^2}$ is an increasing function in $p$.

Now suppose that $K$ has property \QA. Then for $p\gg0$ we can be sure that $S^3_{(pq-1)/q^2}(K)$ is the two-fold branched cover of a quasi-alternating link. Therefore, $S^3_{pq-1}(\sC_{q,p}(K))$ must be a two-fold branched cover of a quasi-alternating link as well. Now since the cable of a strongly invertible knot is strongly invertible, the representative of the associated quotient tangle to $\sC_{q,p}(K)$ compatible with $(\mu,(pq-1)\mu+\lambda)$ is a quasi-alternating tangle for all $p$ sufficiently large. This observation, together with an application of \fullref{thm:main-qa}, proves the claim.  
\end{proof}
Notice that the proof of \fullref{thm:cables} depends only on properties of Dehn surgery and makes no reference to diagrams of the branch sets. Moreover, we recover a result due to Hedden that sufficiently positive cables of L-space knots provide new examples of L-space knots \cite[Theorem 1.10]{Hedden2008}. However, Hedden demonstrates that, given an L-space knot $K$, $\sC_{q,p}(K)$ admits L-space surgeries whenever $p\ge q(2g(K)-1)$ where $g(K)$ denotes the Seifert genus of $K$. It is natural to ask if $\sC_{q,q(2g(K)-1)}(K)$ has property \QA, as these seem to be possible candidates for L-space knots failing property \QA. 

\section{Constructing quasi-alternating Montesinos links.}\label{sec:QA-Montesinos}

We conclude by observing that the interaction between Dehn surgery (in the cover) and quasi-alternating tangles (in the base) provides a recipe for constructing infinite families of quasi-alternating Montesinos links. As shown below, every Montesinos link arises in this way, and as such it would be interesting to compare this approach with the constructions of Champanerkar and Kofman \cite{CK2009} and Widmer \cite{Widmer2008}. We remark that there is a complete classification Seifert fibered L-spaces (with base orbifold $S^2$, a class to which we restrict for the remainder of this work) due to \OSz\ \cite{OSz2003-plumbed}, and the construction given here shows that many of these are realized as the two-fold branched cover of a quasi-alternating link. For background on Dehn filling Seifert fibered manifolds we refer the reader to Boyer \cite{Boyer2002}. 

Let $M$ be a Seifert fibered space with base orbifold a disk with $n$ cone points denoted $D^2(p_1,\ldots,p_n)$. Then $\partial M$ is a torus, with a distinguished slope $\fibre$ given by a regular fibre in the boundary. The following is due to Heil \cite{Heil1974}:
\begin{theorem}\label{thm:heil}Given an $M$ as described above, $M(\fibre)$ is a connect sum of $n$ lens spaces, and for any slope $\alpha\ne\fibre$ we have that the Dehn filling $M(\alpha)$ is Seifert fibered with base orbifold $S^2(p_1,\ldots,p_n,\Delta(\alpha,\fibre))$.\end{theorem}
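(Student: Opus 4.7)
The strategy is to handle the two regimes for $\alpha$ separately. When $\alpha \neq \fibre$, the Seifert fibration on $M$ extends across the attaching solid torus, so $M(\alpha)$ inherits a Seifert fibration with an enlarged base orbifold. When $\alpha = \fibre$ the fibration is destroyed, and I would instead use vertical annuli in $M$ to exhibit a connect sum decomposition of $M(\fibre)$, proceeding by induction on the number $n$ of cone points.

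For the case $\alpha \neq \fibre$, I would equip the attaching solid torus $V \cong D^2 \times S^1$ with a Seifert fibration whose restriction to $\partial V$ matches the fibering of $\partial M$ by the slope $\fibre$. Since the meridian $\alpha$ of $V$ intersects $\fibre$ geometrically in $\Delta(\alpha, \fibre) \geq 1$ points, such a fibration exists: one fibers $V$ so that the core is an exceptional fiber of multiplicity $\Delta(\alpha, \fibre)$, or a regular fiber when this quantity equals $1$. Matching the fibrations across $\partial M$ extends the Seifert structure to all of $M(\alpha)$, and the base orbifold is obtained from $D^2(p_1, \ldots, p_n)$ by capping the boundary circle with a disk carrying a cone point of order $\Delta(\alpha, \fibre)$, namely $S^2(p_1, \ldots, p_n, \Delta(\alpha, \fibre))$.

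For the case $\alpha = \fibre$, the base case $n = 1$ is immediate: $M$ is itself a solid torus with a single exceptional fiber of order $p_1$, and $M(\fibre)$ is a union of two solid tori along their boundary, hence a lens space $L(p_1, q_1)$ for some $q_1$. For the inductive step with $n \geq 2$, choose a properly embedded arc $c \subset D^2(p_1, \ldots, p_n)$ with endpoints on $\partial D^2$ that separates the cone points into two nonempty groups. Its preimage is a vertical annulus $A \subset M$ whose two boundary circles are regular fibers on $\partial M$; after $\fibre$-filling these two fibers bound meridional disks $D_1, D_2$ of the attaching solid torus, so capping $A$ gives an embedded two-sphere $S = A \cup D_1 \cup D_2 \subset M(\fibre)$. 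Cutting along $S$ and filling the two resulting sphere boundaries with balls realizes $M(\fibre)$ as the connect sum of the $\fibre$-fillings of the Seifert fibered pieces $M_1, M_2$ into which $A$ cuts $M$. Each $M_i$ is Seifert fibered over a disk with strictly fewer cone points, so the induction hypothesis finishes the argument.

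The main obstacle is verifying, in the fiber case, that the sphere $S$ genuinely produces a nontrivial connect sum: one must check that $S$ is separating and bounds a ball on neither side, and then track how the attaching solid torus $V$ is sliced by $D_1$ and $D_2$ so that each resulting half is absorbed into $M_i$ to reconstitute the $\fibre$-filling of $M_i$ rather than some other filling. These are routine verifications once one uses that $A$ is incompressible in $M$ and that the fibers on $\partial M_i$ coming from $\partial A$ agree with those on the corresponding boundary annulus of $\partial M$, but this is the place where the geometry of the fibration is actually used.
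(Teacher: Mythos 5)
Your proof is correct, but note that the paper does not prove this statement at all: it is quoted as a known result with a citation to Heil, so there is no internal argument to compare against. What you have written is essentially the standard proof of Heil's theorem, and it goes through: for $\alpha\ne\varphi$ the fibration of $\partial M$ by circles of slope $\varphi$ extends over the filling solid torus with core a fibre of multiplicity $\Delta(\alpha,\varphi)$ (a regular fibre when $\Delta(\alpha,\varphi)=1$), giving base orbifold $S^2(p_1,\ldots,p_n,\Delta(\alpha,\varphi))$; for $\alpha=\varphi$ the vertical annulus over a separating arc, capped by two meridian disks of the filling torus, exhibits $M(\varphi)$ as $M_1(\varphi)\# M_2(\varphi)$ and the induction closes with the fibred solid torus base case, where $p_1\ge 2$ guarantees a genuine lens space $L(p_1,q_1)$. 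Two small remarks: the verification you flag as the main obstacle is not actually needed, since the splitting along the separating sphere $S$ gives the connect sum decomposition regardless of whether $S$ is essential (and by induction each summand already contains lens space summands with $p_i\ge 2$, so nontriviality is automatic); and incompressibility of $A$ plays no role --- what matters is only the bookkeeping you describe, that each half of the filling torus together with a $3$--ball reconstitutes a solid torus glued to $\partial M_i$ along the fibre slope.
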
 

This result generalizes Moser's results \cite{Moser1971} pertaining to torus knots (see \fullref{sec:lens}), since a $(p,q)$-torus knot is a regular fibre in a Seifert fibration of $S^3$ with base orbifold $S^2(p,q)$. Note in particular that in the present setting $M(\fibre)$ necessarily a two-fold branched cover of $S^3$ with branch set given by a connect sum of $n$ two-bridge links \cite{HR1985}. More generally, a result of Montesinos says that such manifolds may be obtained as the two-fold branched cover of a tangle $M=\Br(B^3,\tau)$ (though in this setting it may be that the fixed point set $\tau$ includes some closed components in addition to the pair of arcs), so that $M(\alpha)\cong\Br(S^3,L)$ where $L$ is a Montesinos link composed of $n$ rational tangles whenever $\alpha\ne\fibre$ \cite{Montesinos1976}.

Our aim is to identify when $L$ is quasi-alternating, by implicitly extending property \QA\ to regular fibres in certain Seifert fibered spaces. 

To this end, fix a regular fibre $\fibre$ in the boundary of $M$, and denote by $\lm$ the rational longitude. We will suppose that $\fibre\ne\lm$; this assumption ensures that $M$ is not the twisted $I$-bundle over the Klein bottle (for this case we refer the reader to \cite[Section 5.2]{Watson2008}). Given an orientation of $\lm$, fix an orientation on $\fibre$ so that $\fibre\cdot\lm>0$. Now for any slope $\mu$ with the property that $\mu\cdot\fibre=+1$ we have that $\alpha=\mu+k\fibre$ shares this same property for every integer $k$. As a result, $\alpha\cdot\lm>0$ for $k\gg0$. Fix such a $k$, together with a representative for the associated quotient tangle $T$ compatible with the resulting pair $\{\alpha,\fibre\}$. Then by \fullref{thm:heil} (together with the preceding discussion), the branch set for the fibre filling $\tau(0)$ is a connect sum of $n$ two-bridge links and $\tau(\overzero)$ is a Montesinos link composed of $n$ rational tangles. 

\begin{proposition}\label{prp:qa-seifert}
With the above notation, if $\tau(\overzero)$ is a quasi-alternating link, then $T$ is a quasi-alternating tangle. In particular, $\tau(r)$ is a quasi-alternating link for all $r\ge0$.
\end{proposition}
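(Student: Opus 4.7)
The plan is to verify that $T$ meets the definition of a quasi-alternating tangle and then invoke \fullref{thm:main-qa} directly. By hypothesis $\tau(\overzero)$, which corresponds to the $\alpha$-filling of $M$, is already quasi-alternating, so the only remaining task is to identify $\tau(0)$ and show that it too is quasi-alternating.

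First I would identify $\tau(0)$ explicitly. Under the compatibility $\widetilde{\gamma_0}=\fibre$, the link $\tau(0)$ is the branch set for the fibre filling $M(\fibre)$. By \fullref{thm:heil} this filling is a connect sum of $n$ lens spaces, and by Hodgson--Rubinstein \cite{HR1985} the branch set realizing a connect sum of lens spaces as a two-fold branched cover of $S^3$ is a connect sum of $n$ two-bridge links. Since two-bridge links are non-split alternating, gluing their alternating diagrams along matching over/under strands at the connect-sum points produces a non-split alternating diagram for $\tau(0)$. Hence $\tau(0)$ is non-split alternating, and by \cite[Lemma 3.2]{OSz2005-branch} it is quasi-alternating.

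With both $\tau(\overzero)$ and $\tau(0)$ now quasi-alternating, $T$ is a quasi-alternating tangle in the sense of \fullref{sec:QA-tangles}, and the chosen representative compatible with $\{\alpha,\fibre\}$ is a quasi-alternating framing. To apply \fullref{thm:main-qa}, I check that no mirroring or renaming is forced on us: the setup gives $\alpha\cdot\fibre = \mu\cdot\fibre = +1$, $\fibre\cdot\lm>0$ by the choice of orientation on $\fibre$, and $\alpha\cdot\lm=\mu\cdot\lm + k(\fibre\cdot\lm)>0$ for the fixed $k\gg 0$. These are precisely the positivity conditions used in the proof of \fullref{thm:main-qa}, so the determinant relation $\det(\tau(1))=\det(\tau(\overzero))+\det(\tau(0))$ and the subsequent induction on the length of the continued fraction expansion of $r=\pq\ge 0$ go through verbatim, yielding that $\tau(r)$ is quasi-alternating for every rational $r\ge 0$.

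The work in this argument is really concentrated in the preceding sections: the substantive inputs are \fullref{thm:heil}, Hodgson--Rubinstein, and \fullref{thm:main-qa}. The only point that requires any care is confirming that the orientations arranged in the preamble to the proposition match the sign conventions needed for \fullref{thm:main-qa} to apply without recourse to the ``up to mirrors and renaming'' clause; once this bookkeeping is done, the argument is essentially a one-line reduction to \fullref{thm:main-qa}.
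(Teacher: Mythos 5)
Your argument is correct and follows the paper's proof exactly: the paper also deduces the proposition immediately from the setup preceding it (Heil's theorem plus Hodgson--Rubinstein identifying $\tau(0)$ as a connect sum of two-bridge links, with the orientation conventions $\alpha\cdot\fibre=+1$, $\alpha\cdot\lm>0$, $\fibre\cdot\lm>0$ arranged in advance) together with an application of \fullref{thm:main-qa}. Your only addition is to spell out explicitly that a connect sum of two-bridge links is non-split alternating and hence quasi-alternating, a step the paper leaves implicit.
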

\begin{proof}This is immediate from the set up preceding the statement of the proposition, together with an application of \fullref{thm:main-qa}.\end{proof}

This proposition may be used to construct infinite families of quasi-alternating links, in an obvious manner, provided the quasi-alternating requirement on $\tau(\overzero)$ may be established. 

\subsection{Small Seifert fibered spaces} A Seifert fibered space is called {\em small} if it contains exactly 3 exceptional fibres (that is, has base orbifold $S^2(p_1,p_2,p_3)$). Notice that, by applying \fullref{thm:heil}, such manifolds may be obtained by Dehn filling 
of some Seifert fibered $M$ with base orbifold $D^2(p_1,p_2)$. Moreover, there is a slope $\alpha$ with $\alpha\cdot\lm>0$ and $\alpha\cdot\fibre=+1$ so that $M(\alpha)$ is a lens space (with base orbifold $S^2(p_1,p_2)$). Let $T$ be the associated quotient tangle so that $M\cong\Br(B^3,\tau)$, with representative chosen compatible with $\{\alpha,\fibre\}$. As a particular instance of \fullref{prp:qa-seifert}, we have demonstrated the following (compare \fullref{prp:berge} in the case of torus knot exteriors):

\begin{proposition}\label{prp:qa-small} Every $M$ as described above is the two-fold branched cover of a quasi-alternating tangle.\end{proposition}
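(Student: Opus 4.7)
The plan is to apply \fullref{prp:qa-seifert} directly: given the quotient tangle $T=(B^3,\tau)$ with representative chosen compatible with the pair $\{\alpha,\fibre\}$ (as specified in the paragraph preceding the statement), the only thing to verify is that the link $\tau(\overzero)$ is quasi-alternating. Once this is done, \fullref{prp:qa-seifert} returns that $T$ is a quasi-alternating tangle, and moreover $\tau(r)$ is quasi-alternating for every rational $r\ge 0$ -- a strictly stronger conclusion than what the proposition asks for.

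The next step is to identify $\tau(\overzero)$ explicitly. By the compatibility of the framing, the Montesinos trick gives $M(\alpha)\cong\Br(S^3,\tau(\overzero))$. Since $M(\alpha)$ is a lens space by hypothesis, the result of Hodgson and Rubinstein \cite{HR1985} tells us that any such two-fold branched cover presentation of a lens space has branch set a two-bridge link (the degenerate case $M(\alpha)\cong S^3$ simply producing the unknot, which is quasi-alternating by definition). Hence $\tau(\overzero)$ is a two-bridge link, hence alternating and non-split, hence quasi-alternating by \cite[Lemma 3.2]{OSz2005-branch}. This completes the verification of the hypothesis of \fullref{prp:qa-seifert}.

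I do not anticipate any serious obstacle: the argument is essentially a bookkeeping exercise that assembles machinery already developed in the paper. The one point requiring care is the consistency of the framing conventions, ensuring that $\tau(\overzero)$ corresponds to $\alpha$-filling rather than to the fibre filling; this is dictated by the compatibility $\widetilde{\gamma_{\overzero}}=\alpha$ and $\widetilde{\gamma_0}=\fibre$ in the cover. A reassuring consistency check comes from \fullref{thm:heil}: under these same conventions the opposite closure $\tau(0)$ is the branch set for $M(\fibre)$, which is a connect sum of two two-bridge links by Heil's theorem, exactly matching the Montesinos-type branch set one expects when the tangle has two rational subtangles arising from the two cone points of the base orbifold $D^2(p_1,p_2)$.
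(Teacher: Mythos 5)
Your argument is correct and is essentially the paper's own: the paper obtains \fullref{prp:qa-small} as "a particular instance of \fullref{prp:qa-seifert}," with the hypothesis on $\tau(\overzero)$ supplied exactly as you do, by noting that $M(\alpha)$ is a lens space so Hodgson--Rubinstein forces the branch set to be a two-bridge (hence quasi-alternating) link, while Heil's theorem identifies $\tau(0)$ with a connect sum of two-bridge links. Your framing-convention check ($\widetilde{\gamma_{\overzero}}=\alpha$, $\widetilde{\gamma_0}=\fibre$) is the same bookkeeping the paper builds into the choice of compatible representative.
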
 

\subsection{An iterative construction}

\fullref{prp:qa-small} establishes that \fullref{prp:qa-seifert} is not vacuous. In particular, we observe that a regular fibre in a Seifert fibration of a lens space is a knot admitting a lens space surgery, and indeed provides an example of a knot satisfying a more general form of property \QA\ up to mirrors.

With this as a base case, it is now clear that if $Y$ is a Seifert fibration with base orbifold $S^2(p_1,\ldots,p_n)$, and $Y$ is the two-fold branched cover of a quasi-alternating link, then a regular fibre in $Y$ is a knot satisfying a more general form of property $\QA$. We end with an example demonstrating how this fact may be used to generate infinite families of quasi-alternating Montesinos links, using Dehn surgery to control the construction.  

 \begin{figure}[ht!]
\begin{center}
\labellist \small
	\pinlabel $\alpha$ at 323 485
	\pinlabel $\fibre$ at 178 342
\endlabellist
\raisebox{0pt}{\includegraphics[scale=0.25]{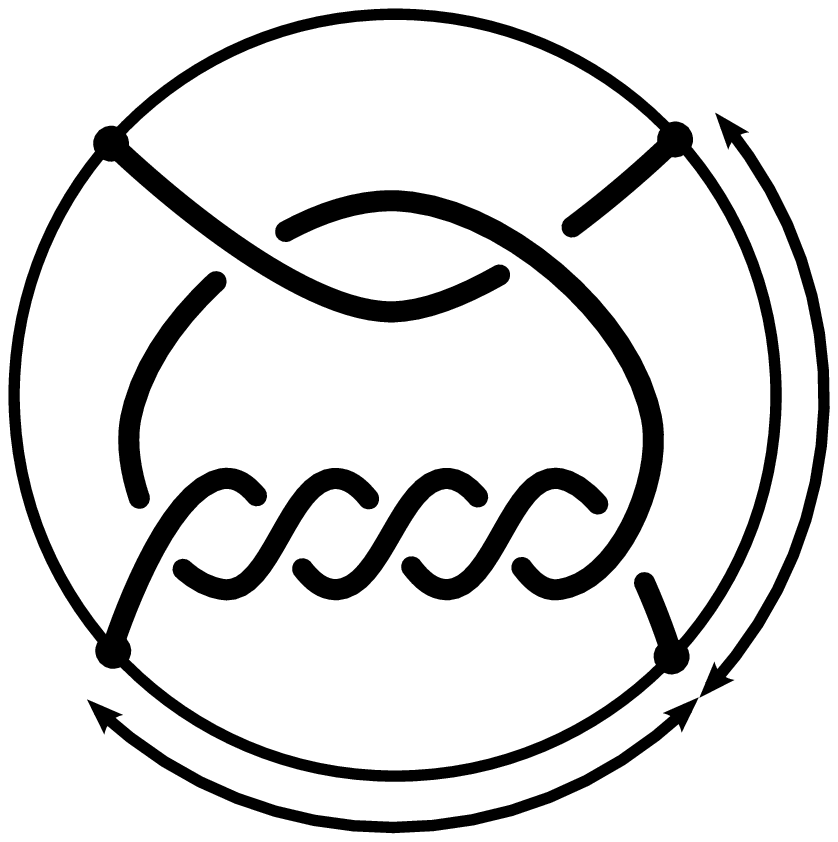}}\qquad\qquad
\raisebox{-10pt}{\includegraphics[scale=0.25]{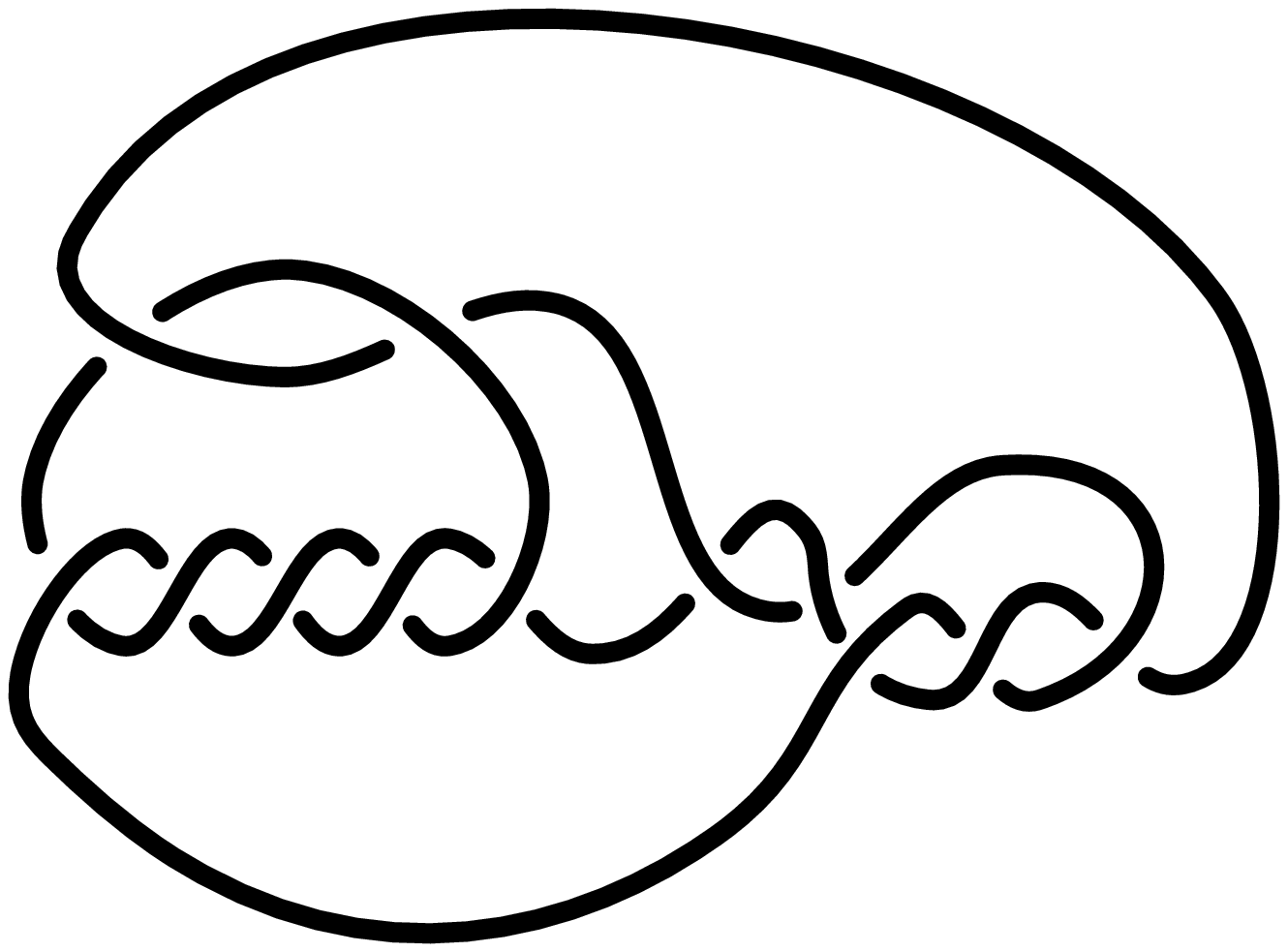}}\qquad\qquad
\labellist \small
	\pinlabel $\gamma$ at 203 652
\endlabellist
\raisebox{-5pt}{\includegraphics[scale=0.25]{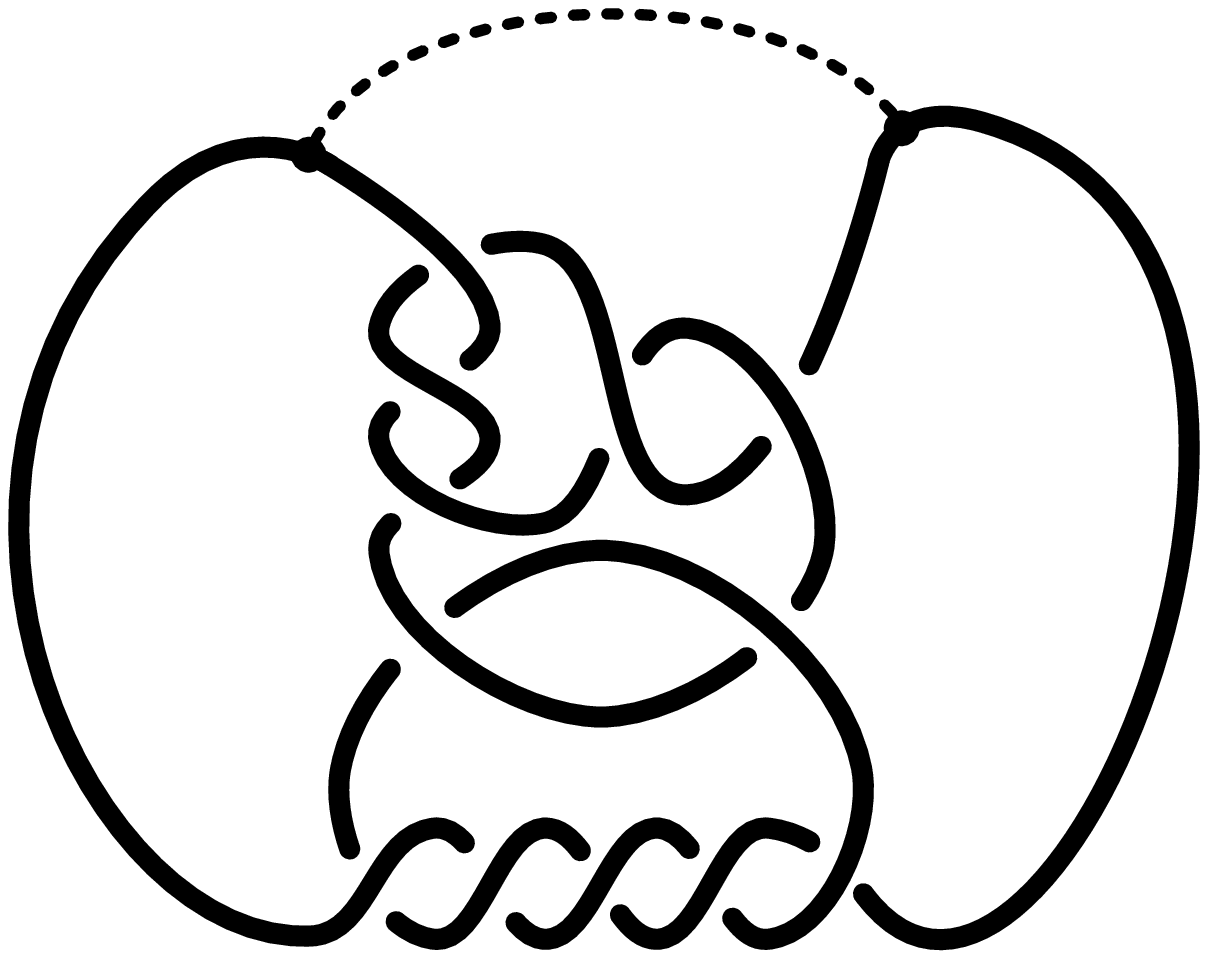}}
\end{center}
\caption{A tangle (left) with Seifert fibered two-fold branched cover (by abuse, the arcs $\gamma_{\overzero}$ and $\gamma_{0}$ have been labelled by their respective lifts), the branch set associated to $7\alpha+3\fibre$ Dehn filling giving rise to the knot $12^n_{500}$ (centre), and another view of $12^n_{500}$ with dashed arc $\gamma$ that lifts to a knot in the cover $\widetilde{\gamma}$  isotopic to a regular fibre.}
\label{fig:12n500-step-one}\end{figure}
First consider the tangle $T=(B^3,\tau)$ shown in \fullref{fig:12n500-step-one}. The Seifert fibration in the cover has base orbifold $D^2(2,5)$ (notice that the tangle is the sum of two rational tangles). Moreover, this is a quasi-alternating framing for $T$: $\tau(0)$ is a connect sum of two-bridge links (the Hopf link and the cinqfoil) with $\det(\tau(0))=(2)(5)$, while $\tau(\overzero)$ is a two-bridge knot (the trefoil) with $\det(\tau(\overzero))=3$. Indeed, one may check that $\tau(1)$ is the knot $7_3$ with $\det(\tau(1))=13=3+(5)(2)$ as required.

As an application of \fullref{thm:main-qa}, it follows that $\tau(r)$ is a quasi-alternating link for every $r\ge0$. For example, the quasi-alternating knot $\tau(\frac{7}{3})\simeq12^n_{500}$ is shown in \fullref{fig:12n500-step-one}, noting that $\frac{7}{3}=[2,3]$. This gives the branch set associated to Dehn filling $\partial\left(\Br(B^3,\tau)\right)$ along the slope $7\alpha+3\fibre$. Here, $\det(\tau(\frac{7}{3}))=7(3)+3(5)(2)=51$ which decomposes as $\det(\tau(\frac{5}{2}))+\det(\tau(2))=(5(3)+2(10))+(2(3)+1(10))=35+16$ by resolving the final crossing added by the continued fraction. Note that one must verify that $c_M=1$, though this may be easily determined from $\det(\tau(0))$ and $\det(\tau(\overzero))$.

\begin{figure}[ht!]
\begin{center}
\labellist \small
	\pinlabel $\alpha'$ at 367 505
	\pinlabel $\fibre'$ at 213 290
\endlabellist
\raisebox{0pt}{\includegraphics[scale=0.25]{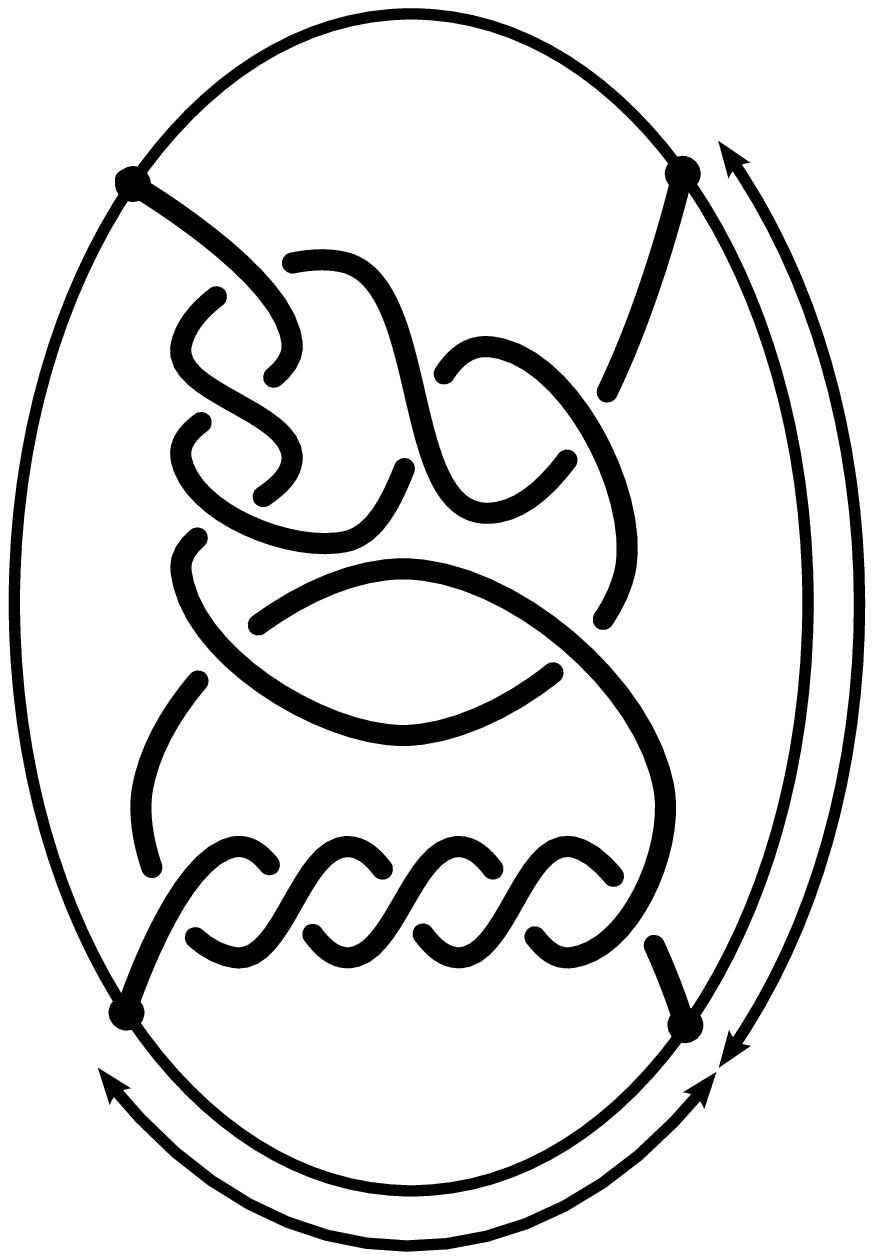}}\qquad\qquad
\raisebox{0pt}{\includegraphics[scale=0.25]{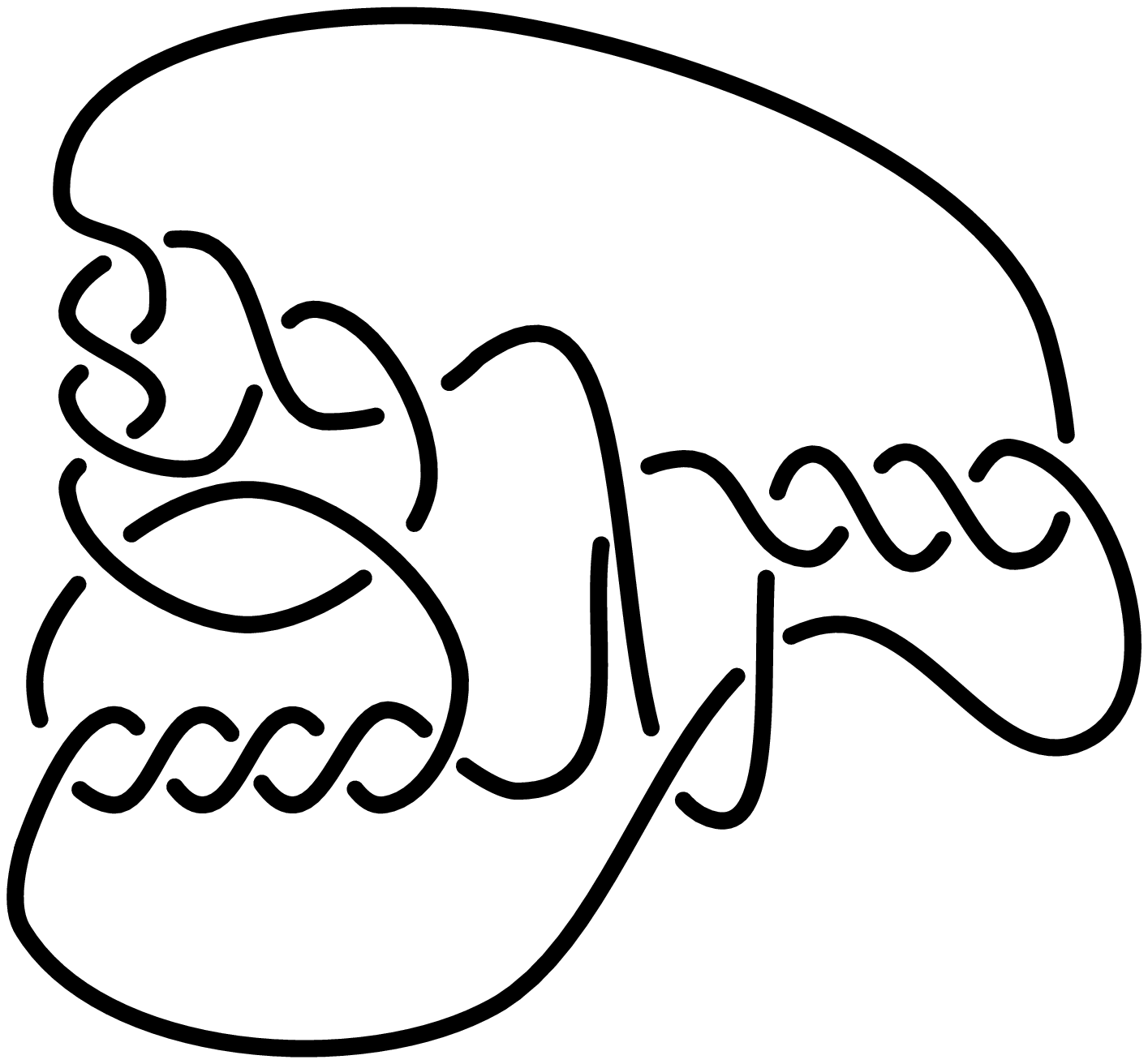}}\qquad\qquad
\end{center}
\caption{A quasi-alternating tangle (left) and a quasi-alternating Montesinos knot (right).}
\label{fig:12n500-step-two}\end{figure}

Given that $12^n_{500}$ is a quasi-alternating knot, with small Seifert fibered two-fold branched cover, we may repeat the above process forming a new quasi-alternating tangle $T'=(B^3,\tau')$. By removing a neighbourhood of the arc $\gamma$ shown in \fullref{fig:12n500-step-one} we obtain the framed quasi-alternating tangle shown in \fullref{fig:12n500-step-two}. By construction, $\tau(0)$ gives a connect sum of two-bridge knots (with $\det(\tau(0))=(2)(5)(7)$) and branch set for Dehn filling along $\fibre'$, while $\tau(\overzero)\simeq12^n_{500}$ is the branch set for Dehn filling along $\alpha'$. In fact, $\gamma$ lifts to a knot isotopic to $\fibre$ in $\Br(S^3, 12^n_{500})$.

Again, every link $\tau'(r)$ is quasi-alternating for $r\ge0$ as a result of \fullref{thm:main-qa}. A particular example, corresponding to filling along the slope $13\alpha'+9\fibre'$, is shown in \fullref{fig:12n500-step-two}. It may be easily verified that $\frac{13}{9}=[1,2,4]$ and $\det(\tau(\frac{13}{9}))=13(51)+9(2)(5)(7)=1293$

This process may now be iterated {\em ad infinitum} to obtain further infinite families of quasi-alternating Montesinos links. We remark that every quasi-alternating Montesinos link $L$ is contained in such an infinite family: it suffices to identify an embedded arc $\gamma$ with endpoints on $L$ whose lift in $\Br(S^3,L)$ is isotopic to a regular fibre, and repeat the construction above. As a result, every quasi-alternating Montesinos link arises through this iterative construction.

\bibliographystyle{gtart}
\bibliography{references/published,references/unpublished}

\end{document}